\newtheorem*{question}{Question}
\newtheorem{theorem}{Theorem}[section]
\newtheorem{corollary}[theorem]{Corollary}
\newtheorem{lemma}[theorem]{Lemma}
\newtheorem{proposition}[theorem]{Proposition}
\theoremstyle{definition}
\newtheorem{definition}{Definition}[section]
\newtheorem{example}{Example}[section]
\newtheorem{remark}{Remark}[section]
\numberwithin{equation}{section}
\newcommand{\RR}{\mathbb{R}}
\newcommand{\CC}{\mathbb{C}}
\newcommand{\ZZ}{\mathbb{Z}}
\newcommand{\ZZZ}{\mathbb{Z}/2\mathbb{Z}}
\newcommand{\MM}{\mathbb{M}}
\newcommand{\HH}{\mathbb{H}}
\newcommand{\FF}{\mathbb{F}}
\newcommand{\UU}{\mathscr{U}}
\newcommand{\VV}{\mathscr{V}}
\newcommand{\WW}{\mathscr{W}}
\newcommand{\XX}{\mathscr{X}}
\newcommand{\SSS}{\mathcal{S}}
\newcommand{\II}{\mathcal{I}}
\newcommand{\JJ}{\mathcal{J}}
\newcommand{\HHH}{\mathcal{H}}
\newcommand{\LL}{\mathscr{L}}
\newcommand{\OO}{\mathcal{O}}
\newcommand{\Lg}{\mathfrak{g}}
\newcommand{\Lt}{\mathfrak{t}}
\newcommand{\Lm}{\mathfrak{m}}
\newcommand{\Ln}{\mathfrak{n}}
\newcommand{\ind}{\mathrm{ind}}
\newcommand{\sgn}{\mathrm{sgn}}
\newcommand{\diag}{\mathrm{diag}}
\newcommand{\spa}{\mathrm{span}}
\newcommand{\im}{\mathrm{Im}}
\newcommand{\ii}{\sqrt{-1}}
\def\DJ{{\hbox{D\kern-.8em\raise.15ex\hbox{--}\kern.35em}}}
\def\DJo{\DJ okovi\'c}
\def\DZD{D.\v{Z}. \DJo}
\begin{document}

\title[Universal subspaces]
{Universal subspaces for compact Lie groups}

\author[J. An]{Jinpeng An}
\address{Department of Pure Mathematics, University of
Waterloo, Waterloo, Ontario, N2L 3G1, Canada}
\address{Current address: LMAM, School of Mathematical Sciences, Peking University, Beijing, 100871, China}
\email{anjinpeng@gmail.com}

\author[D.\v{Z}. \DJ okovi\'{c}]{Dragomir \v{Z}. \DJ okovi\'{c}}
\address{Department of Pure Mathematics, University of
Waterloo, Waterloo, Ontario, N2L 3G1, Canada}
\email{djokovic@uwaterloo.ca}

\subjclass[2000]{22C05, 22E45, 57T15, 15A21.}

\keywords{Compact Lie groups and their representations,
characteristic classes, cohomology of flag manifolds, invariant
theory of Weyl groups, matrix zero patterns.}

\thanks{The second author was supported in part by an NSERC Discovery
Grant.}

\begin{abstract}
For a representation of a connected compact Lie group $G$ in a
finite dimensional real vector space $\UU$ and a subspace $\VV$ of
$\UU$, invariant under a maximal torus of $G$, we obtain a
sufficient condition for $\VV$ to meet all $G$-orbits in $\UU$,
which is also necessary in certain cases. The proof makes use of the
cohomology of flag manifolds and the invariant theory of Weyl
groups. Then we apply our condition to the conjugation
representations of $U(n)$, $Sp(n)$, and $SO(n)$ in the space of
$n\times n$ matrices over $\CC$, $\HH$, and $\RR$, respectively. In
particular, we obtain an interesting generalization of Schur's
triangularization theorem.
\end{abstract}

\maketitle

\section{Introduction}

Let $\rho$ be a representation of a connected compact Lie group $G$
in a finite dimensional real vector space $\UU$. We say that a
linear subspace $\VV\subset \UU$ is universal if every $G$-orbit
meets $\VV$. Examples of universal subspaces occur in many places.
For instance, when $\rho$ is irreducible and nontrivial, it is easy
to show (see \cite{GHV}) that every hyperplane of $\UU$ is
universal. For the adjoint representation of $G$ in its Lie algebra
$\Lg$, it is well known that every Cartan subalgebra is universal.
If we consider the complexified adjoint representation of $G$ in the
complexification $\Lg_\CC$ of $\Lg$, then every Borel subalgebra of
$\Lg_\CC$ is universal, and if moreover $G$ is semisimple, then the
sum of all root spaces in $\Lg_\CC$ is also universal (\cite{DT}).
Questions of similar nature have been recently studied in the
context of representation theory of algebraic groups, see e.g.
\cite{GHV,KW}.

In this paper we consider the case where the stabilizer of $\VV$ in
$G$ contains a maximal torus $T$ of $G$. Our main result (Theorem
\ref{T:main}) gives a sufficient condition for the universality of
$\VV$, which is easy to check in concrete examples. With an
additional restriction on $\VV$ this condition becomes also
necessary (see Theorem \ref{T:inverse}).

Our method goes as follows. Consider the trivial vector bundle
$E_\UU=G/T\times\UU$ over the flag manifold $G/T$. Since $\VV$ is
$T$-invariant, there is a well-defined subbundle $E_\VV$ of $E_\UU$
whose fiber at $gT$ is $\rho(g)(\VV)$. Every vector $u\in\UU$,
viewed as a constant section of $E_\UU$, induces a section $s_u$ of
the quotient bundle $E_\UU/E_\VV$. It is easy to see that the
$G$-orbit of $u$ meets $\VV$ if and only if $s_u$ has a zero. So
$\VV$ is universal if and only if $s_u$ has a zero for every
$u\in\UU$. A sufficient condition for this is that $E_\UU/E_\VV$ is
orientable and has a nonzero Euler class. If we choose a
$T$-invariant subspace $\WW$ of $\UU$ complementary to $\VV$ and
construct similarly the subbundle $E_\WW$ of $E_\UU$, then
$E_\WW\cong E_\UU/E_\VV$. It is easy to see that $E_\WW$ is always
orientable and is equivalent to $G\times_T\WW$, the associated
bundle induced by the $T$-action on $\WW$. By using a theorem of
Borel on the cohomology of $G/T$ and the invariant theory of Weyl
groups, we can compute explicitly the Euler class $e(E_\WW)$ of
$E_\WW$.

Let $\XX(T)$ be the character group of $T$, and let $\SSS$ be the
graded algebra of real polynomial functions on the Lie algebra of
$T$. The map from $\XX(T)$ to $H^2(G/T,\RR)$ sending $\chi\in\XX(T)$
to the first Chern class of the complex line bundle
$G\times_\chi\CC$ induces a homomorphism $\varphi:\SSS\rightarrow
H^*(G/T,\RR)$ of graded algebras, provided we double the degree in
$\SSS$. Borel's theorem asserts that $\varphi$ is surjective and its
kernel is the ideal $\II$ of $\SSS$ generated by the basic
generators of $W$-invariant polynomials, where $W=N_G(T)/T$ is the
Weyl group. If we assume that the above subspace $\WW$ of $\UU$ does
not contain nonzero $T$-invariant vectors, then $\WW$ can be
decomposed as a direct sum $\bigoplus_{i=1}^d\WW_i$ of
$2$-dimensional $T$-irreducible subspaces. The $T$-action on every
$\WW_i$ is equivalent to the real $T$-module associated to some
$\chi_i\in\XX(T)$, and then $G\times_T\WW_i$ is equivalent to the
real vector bundle associated to $G\times_{\chi_i}\CC$. So
$e(E_\WW)$ is equal to the product of the first Chern classes of
$G\times_{\chi_i}\CC$, i.e., $\prod_{i=1}^d\varphi(\chi_i)$. We
refer to the homogeneous polynomial $f_\VV=\prod_{i=1}^d\chi_i$ as
the characteristic polynomial of $\VV$. So $e(E_\WW)\neq0$ if and
only if $f_\VV\notin\II$, and we obtain the desired sufficient
condition for the universality of $\VV$. In general, the question of
whether $f_\VV$ is in $\II$ can be resolved using Gr\"{o}bner bases.

An easy dimension argument shows that if $\VV$ is universal, then
$\dim\VV\geq\dim\UU-\dim G/T$. The case of equality is of particular
interest (indeed, for many representations, a universal subspace
$\VV$ whose universality can be detected using our method can be
shrunk to a smaller universal subspace for which the equality
holds). In this case, the rank of $E_\WW$ is equal to $\dim G/T$,
and $f_\VV$ has degree $m=\frac{1}{2}\dim G/T$. By invariant theory
of Weyl groups, a homogeneous polynomial of degree $m$ is in $\II$
if and only if it is orthogonal to the fundamental harmonic
polynomial $f_0$ (see \eqref{E:f0}) with respect to a $W$-invariant
inner product on $\SSS$. So if we define the characteristic number
$C_\VV$ of $\VV$ as the inner product of $f_\VV$ and $f_0$ (up to a
constant factor, which will be chosen in a way suitable for the
computation of intersection numbers), then $f_\VV\notin\II$ if and
only if $C_\VV\neq0$, which gives a simpler equivalent sufficient
condition for the universality of $\VV$. Moreover, if we assume that
the intersection indexes of the above section $s_u$ and the zero
section $s_0$ of $E_\WW$ are all equal whenever $s_u$ is transversal
to $s_0$, then $\VV$ is universal if and only if $C_\VV\neq0$.

In concrete examples that we shall consider, the characteristic
polynomials and numbers are easy to compute. We shall do this for
the conjugation representations of $U(n)$, $Sp(n)$, and $SO(n)$ in
the spaces $\MM(n,\FF)$ of $n\times n$ matrices over $\FF=\CC$,
$\HH$, and $\RR$, respectively, for subspaces of $\MM(n,\FF)$
determined by zero patterns. A zero pattern is a subset $I$ of
$\{(i,j)|i,j\in\{1,\ldots,n\},i\neq j\}$ of cardinality $n(n-1)/2$.
For $\FF=\CC$ or $\HH$ and a zero pattern $I$, we denote by
$\MM_I(n,\FF)$ the subspace of $\MM(n,\FF)$ consisting of matrices
whose $(i,j)$-th entry is $0$ whenever $(i,j)\in I$. We shall apply
the above result to $\MM_I(n,\FF)$ and obtain a sufficient condition
for the universality of $\MM_I(n,\FF)$ expressed as a simple
property of $I$. In particular, we show that if $I$ contains exactly
one of $(i,j)$ and $(j,i)$ for $i\neq j$ (we call such $I$ a simple
zero pattern), then $\MM_I(n,\FF)$ is universal. Similar results are
also obtained for $\FF=\RR$. Note that if $I_0=\{(i,j)|1\leq i<j\leq
n\}$, $\MM_{I_0}(n,\CC)$ is the subspace of all complex lower
triangular matrices, its universality is known as Schur's
triangularization theorem \cite{IS}. So our result gives a genuine
generalization of Schur's theorem. If $\FF=\CC$ and $I$ is a
bitriangular zero pattern (defined in Section \ref{Schur}), the
above sufficient condition for the universality of $\MM_I(n,\CC)$ is
also necessary, and the number of flags in $U(n)/U(1)^n$ sending a
generic matrix into $\MM_I(n,\CC)$ can be computed directly from
$I$.


The problem of universality of subspaces of $\MM(n,\CC)$ determined
by zero patterns has been raised by researchers in linear algebra.
For instance, the universality of $\MM_I(4,\CC)$ when
$I=\{(1,3),(1,4),(2,4),(3,1),(4,1),(4,2)\}$ has been studied by
several authors. It was shown by Pati \cite{VP} that it is
universal. A simpler proof was given later in \cite{DD} where it is
also shown that the number of flags sending a generic matrix
$A\in\MM(4,\CC)$ into $\MM_I(n,\CC)$ is equal to $12$. This zero
pattern is bitriangular and so both results follow immediately from
Theorem \ref{T:A-inverse}. Another simple example is the problem of
universality of $\MM_I(3,\CC)$ where $I=\{(1,3),(2,1),(3,2)\}$. This
was raised a few years ago \cite{DJ} but remained unsettled until
now. In our terminology $I$ is a simple zero pattern, so
$\MM_I(3,\CC)$ is universal (see Theorem \ref{T:A2} and the
paragraph preceding it). For further information along this
direction see e.g. \cite{DZ,HS}.

The arrangement of this paper is as follows. In Section 2 we shall
recall some preliminaries on Euler classes, intersection indexes,
cohomology of flag manifolds, and invariant theory of Weyl groups.
The definitions of the characteristic polynomial and the
characteristic number of a subspace and their properties will be
given in Section 3. The main results will be proved in Section 4. In
Section 5 we shall study the universality of subspaces determined by
zero patterns, generalizing Schur's triangularization theorem. Then
in Section 6 we shall briefly discuss the possibility of
generalizing our method and propose a question.

The first author would like to thank Jiu-Kang Yu for valuable
conversations.

\section{Preliminaries}

In this section we recall some basic facts about Euler classes of
oriented vector bundles, cohomology of the flag manifold $G/T$, and
invariant theory of Weyl groups. Throughout this paper, we denote by
$G$ a connected compact Lie group and by $T$ a maximal torus of $G$.

\subsection{Euler classes and intersection indexes}

Let $M$ be a connected compact smooth $n$-dimensional manifold, and
let $E$ be a smooth real oriented vector bundle over $M$ of rank
$r$. It is well known that if the Euler class $e(E)\in H^r(M,\ZZ)$
of $E$ is nonzero, then every smooth section of $E$ has a zero. If
$E=E_1\oplus E_2$ as oriented vector bundles, then
$e(E)=e(E_1)e(E_2)$. If $E$ admits a complex structure compatible
with the orientation, then the top Chern class $c_{\mathrm{top}}(E)$
of $E$ is equal to $e(E)$ (see \cite{BT,MS}).

Now we assume that $M$ is oriented and $r=n$. Let $s_0$ be the zero
section of $E$. Then for any smooth section $s$ of $E$, we have
$e(E)([M])=I(s,s_0)$, where $[M]\in H_n(M,\ZZ)$ is the fundamental
class of $M$ and $I(s,s_0)$ is the intersection number of $s$ and
$s_0$. Let $Z(s)=\{x\in M|s(x)=0\}$ be the zero locus of $s$. We
write $s\pitchfork_xs_0$ if $s$ is transversal to $s_0$ at the point
$x\in Z(s)$, and $s\pitchfork s_0$ if $s\pitchfork_xs_0$ for all
$x\in Z(s)$. If $s\pitchfork s_0$, then $Z(s)$ is finite and
$I(s,s_0)=\sum_{x\in Z(s)}\ind_x(s)$, where $\ind_x(s)=\pm1$ is the
intersection index of $s$ and $s_0$ at $x$. Assume that $E$ is a
subbundle of a trivial bundle $M\times\UU$, where $\UU$ is a real
vector space, and let us view $s$ as a map $M\rightarrow\UU$. For
$x\in Z(s)$, we can compute $\ind_x(s)$ as follows. Since
$s\pitchfork_xs_0$, the tangent map $(ds)_x:T_xM\rightarrow
T_0\UU\cong\UU$ is injective and has image $E_x$, the fiber of $E$
at $x$. If we view $(ds)_x$ as an invertible map $T_xM\rightarrow
E_x$, then $\ind_x(s)=\sgn((ds)_x)$. (Here the sign $\sgn(A)$ of an
invertible linear map $A:V_1\rightarrow V_2$ between real oriented
vector spaces refers to the sign of the determinant of the matrix of
$A$ with respect to any ordered bases of $V_1$ and $V_2$ compatible
with the orientations.)

We remark that if $E$ is the tangent bundle $TM$ of $M$, then
$e(TM)([M])$ is the Euler characteristic of $M$.

In this paper, $M$ will almost exclusively be the flag manifold
$G/T$. We shall need some facts about the cohomology of $G/T$, which
we now recall.

\subsection{Cohomology of $G/T$}
Consider the maximal torus $T$ of the connected compact Lie group
$G$. Let $\XX(T)$ be the character group of $T$. Every character
$\chi\in \XX(T)$, viewed as a representation of $T$ in $\CC$,
induces a complex line bundle $L_\chi=G\times_\chi\CC$ over $G/T$,
where the total space of $L_\chi$ is the space of $T$-orbits in
$G\times\CC$ with the $T$-action $t\cdot(g,z)=(gt^{-1},\chi(t)z)$.
Let $c_1(L_\chi)\in H^2(G/T,\ZZ)$ be the first Chern class of
$L_\chi$. Then the map sending $\chi$ to $c_1(L_\chi)$ induces a
homomorphism of abelian groups
\begin{equation}\label{E:homo1}
\XX(T)\rightarrow H^2(G/T,\ZZ).
\end{equation}
Note that $\XX(T)$ can be identified with a lattice in
$\mathfrak{t}^*$, the dual of the Lie algebra $\mathfrak{t}$ of $T$,
by using differentials. It will be more convenient to work with
cohomology with real coefficients. (As $H^*(G/T,\ZZ)$ is torsion
free (\cite{MM}, Chapter 6, Theorem 4.21), no information is lost
when $\ZZ$ is replaced by $\RR$.) So we consider the linear map
\begin{equation}\label{E:homo2}
\mathfrak{t}^*\cong\XX(T)\otimes_\ZZ\RR\rightarrow H^2(G/T,\RR)\cong
H^2(G/T,\ZZ)\otimes_\ZZ\RR
\end{equation}
induced by the homomorphism \eqref{E:homo1}.

Let $\SSS$ be the symmetric algebra of $\Lt^*$, identified with the
graded algebra of real polynomial functions on $\Lt$. Let
$H^*(G/T,\RR)$ be the graded algebra of real cohomology of $G/T$.
The homomorphism \eqref{E:homo2} induces an algebra homomorphism
\begin{equation}\label{E:homo}
\varphi:\SSS\rightarrow H^*(G/T,\RR)
\end{equation}
which doubles the degree. Let $W=N_G(T)/T$ be the Weyl group. Then
$W$ acts naturally on $\Lt, \Lt^*$ and $\SSS$. Let $\JJ$ be the
graded algebra of $W$-invariants in $\SSS$, let $\SSS_d$ be the
homogeneous component of degree $d$ in $\SSS$, and let
$\JJ_d=\JJ\cap\SSS_d$ and $\JJ_+=\bigoplus_{d\geq1}\JJ_d$. Let
$\II=\SSS\JJ_+$ be the ideal of $\SSS$ generated by $\JJ_+$ and set
$\II_d=\II\cap\SSS_d$.

\begin{theorem}[Borel \cite{Bo}, Theorem 26.1]\label{T:Borel}
The above homomorphism $\varphi$ is surjective and
$\ker(\varphi)=\II$. Consequently, $H^*(G/T,\RR)\cong \SSS/\II$ as
graded algebras, provided we double the degrees in $\SSS/\II$.
\end{theorem}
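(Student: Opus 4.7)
The plan is to identify $H^*(G/T,\RR)$ with $\SSS/\II$ by means of the Borel fibration $G/T \hookrightarrow BT \xrightarrow{\pi} BG$, where $BG$, $BT$ are classifying spaces and $\pi$ is induced by the inclusion $T\hookrightarrow G$. The strategy combines two classical inputs --- $H^*(BT,\RR)\cong\SSS$ and $H^*(BG,\RR)\cong\JJ$ --- with a Leray--Hirsch collapse argument for $\pi$.

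First I would set up the identifications. Transgression in the universal bundle $ET\to BT$ identifies $\XX(T)\otimes\RR\cong\Lt^*$ with $H^2(BT,\RR)$ via $\chi\mapsto c_1(ET\times_\chi\CC)$, so that $H^*(BT,\RR)\cong\SSS$. The classifying map $G/T\to BT$ for the principal $T$-bundle $G\to G/T$ pulls back $ET\times_\chi\CC$ to $L_\chi$, hence $\varphi$ is precisely the pullback along this map. The Chern--Weil/Borel theorem gives $H^*(BG,\RR)\cong\JJ$, with $\pi^*$ corresponding to the inclusion $\JJ\hookrightarrow\SSS$. Because $\Omega BG\simeq G$ is connected, $\pi_1(BG)$ acts trivially on the fiber, so the Serre spectral sequence of $\pi$ has $E_2^{p,q}=H^p(BG,\RR)\otimes H^q(G/T,\RR)$.

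The crux is to prove that this spectral sequence collapses at $E_2$ and that the fiber restriction $H^*(BT,\RR)\to H^*(G/T,\RR)$ is surjective. I would argue by comparing Poincar\'{e} series. Chevalley's theorem makes $\SSS$ a free $\JJ$-module of rank $|W|$, yielding $P(\SSS/\II,t)=\prod_{i=1}^{r}(1+t^2+\cdots+t^{2(d_i-1)})$, where $d_1,\ldots,d_r$ are the degrees of the basic $W$-invariants. On the other hand, the Bruhat decomposition $G_\CC/B=\bigsqcup_{w\in W}BwB/B$ gives $G/T$ a CW structure with $|W|$ even-dimensional cells and Poincar\'{e} polynomial $\sum_{w\in W}t^{2\ell(w)}$, known to agree with the previous expression. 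Thus $P(BT,t)=P(BG,t)\cdot P(G/T,t)$ as formal power series, which forces every differential in the spectral sequence to vanish.

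Once collapse is established, Leray--Hirsch gives $H^*(BT,\RR)\cong \pi^*H^*(BG,\RR)\otimes_\RR H^*(G/T,\RR)$ as modules over $\pi^*H^*(BG,\RR)$. Reducing modulo $\pi^*\JJ_+$, i.e.\ tensoring with $\RR$ over $\JJ$, produces an isomorphism $\SSS/\II\xrightarrow{\sim}H^*(G/T,\RR)$; and because the fiber inclusion $G/T\hookrightarrow BT$ is homotopic to the classifying map used above, this isomorphism is induced by $\varphi$, giving both surjectivity and $\ker\varphi=\II$. The main obstacle is the collapse step: both the Poincar\'{e} polynomial computation for $G/T$ and Chevalley's freeness theorem for $\SSS$ over $\JJ$ rely on substantial structure theory of compact Lie groups and Weyl groups. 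Everything else, once the two cohomology computations for $BT$ and $BG$ are granted, is essentially bookkeeping.
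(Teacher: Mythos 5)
Your outline is correct, and since the paper does not prove this theorem (it is quoted from Borel \cite{Bo}), the relevant comparison is with the route the paper itself points to: your identification of $\varphi$ with the fiber restriction of $G/T\hookrightarrow BT\rightarrow BG$, the computations $H^*(BT,\RR)\cong\SSS$ and $H^*(BG,\RR)\cong\JJ$, and the collapse of the Serre spectral sequence forced by the Poincar\'{e}-series identity (Chevalley's freeness of $\SSS$ over $\JJ$ on one side, the Bruhat cell decomposition and the Chevalley--Solomon factorization $\sum_{w\in W}t^{\ell(w)}=\prod_i(1+t+\cdots+t^{d_i-1})$ on the other) is exactly the standard argument indicated by the paper's remark that $\varphi$ factors through the Chern--Weil isomorphism and by its references \cite{Du,MM}. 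The one caveat is that the input $H^*(BG,\RR)\cong\JJ$ is itself a theorem of comparable depth (Chern--Weil for compact groups together with the Chevalley restriction theorem), so it should be acknowledged as a granted black box rather than as bookkeeping; with that granted, the Leray--Hirsch step correctly yields surjectivity of $\varphi$ and $\ker\varphi=\SSS\JJ_+=\II$.
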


Although Borel did not mention Chern classes explicitly in his
theorem in \cite{Bo}, his presentation is equivalent to ours. Chern
classes are explicitly used in the approaches to Borel's Theorem in
\cite{BGG, Fu, KiWe, Re, Yu}. Our construction of the homomorphism
$\varphi$ follows from \cite{KiWe}, Chapter VI, Theorem 2.1 and
\cite{Yu} (see also \cite{Fu}, Section 10.2, Proposition 3). This
homomorphism can be constructed also by using classifying spaces.
Let $BK$ denote the classifying space of a group $K$. Then $\varphi$
is equal to the composition of the Chern-Weil isomorphism
$\SSS\rightarrow H^*(BT,\RR)$ and the homomorphism
$H^*(BT,\RR)\rightarrow H^*(G/T,\RR)$ induced by the fiber bundle
$G/T\rightarrow BT\rightarrow BG$ (see \cite{Du,MM}).

At last, we recall the well known fact that the Euler characteristic
$\chi(G/T)$ of $G/T$ is equal to $|W|$ (see e.g. \cite{MM}, Chapter
5, Theorem 3.14).

\subsection{Invariant theory of Weyl groups}

We have seen above that $H^*(G/T,\RR)$ can be described by
$W$-invariants in the symmetric algebra $\SSS$. To understand
$H^*(G/T,\RR)$ more closely, we now recall some basic facts about
invariant theory of the Weyl group $W$. For details see
\cite{He,Ka,Sm}.

For $H\in\Lt$, the partial derivative $\partial_Hf$ of $f\in \SSS$
is defined by $(\partial_Hf)(H')=\frac{d}{dt}\big|_{t=0}f(H'+tH)$.
Let $(\cdot,\cdot)$ be a $W$-invariant inner product on $\Lt$. Then
there is a $W$-equivariant isomorphism $\iota:\Lt^*\rightarrow\Lt$
defined by the identity $(\iota(\alpha),H)=\alpha(H)$, under which
the map $H\mapsto\partial_H$ extends to an isomorphism
$f\mapsto\partial_f$ of $\SSS$ onto the algebra of differential
operators on $\Lt$ with constant coefficients. The bilinear form $B$
on $\SSS$ defined by $B(f,g)=(\partial_fg)(0)$ is a $W$-invariant
inner product on $\SSS$. If
$\alpha_1,\ldots,\alpha_k,\beta_1,\ldots,\beta_k\in\Lt^*$, then
\begin{equation}\label{E:B}
B(\alpha_1\cdots\alpha_k,\beta_1\cdots\beta_k)=\sum_{\sigma\in
S_k}\prod_{i=1}^k (\iota(\alpha_i),\iota(\beta_{\sigma(i)})),
\end{equation}
where $S_k$ is the symmetric group on $k$ letters. It is easily seen
that $\partial_f$ is the adjoint operator of the multiplication
operator by $f$, that is, $B(g,fh)=B(\partial_fg,h)$ for any
$f,g,h\in\SSS$.

A polynomial $f\in \SSS$ is \emph{$W$-harmonic} if $\partial_gf=0$
for all $g\in \JJ_+$. Let $\HHH$ denote the space of $W$-harmonic
polynomials, and let $\HHH_d=\HHH\cap\SSS_d$. Let
$\Phi\subset\XX(T)\subset\Lt^*$ be the root system of $G$, $\Phi^+$
a system of positive roots, and $m=|\Phi^+|=\frac{1}{2}\dim G/T$.
Some basic facts that we are going to use are collected in the
following theorem.

\begin{theorem}\label{T:invariant}
(1) There are $r=\dim\Lt$ basic homogeneous $W$-invariant
polynomials $F_1,\ldots,F_r$ which are algebraically independent
such that the subalgebra of $\SSS$ generated by $F_1,\ldots,F_r$ is
$\JJ$, and the ideal of $\SSS$ generated by $F_1,\ldots,F_r$ is $\II=\SSS\JJ_+$.\\
(2) The degrees $d_i=\deg F_i$ are determined by $W$,
$\prod_{i=1}^rd_i=|W|$, $\sum_{i=1}^r(d_i-1)=m$.\\
(3) $\SSS=\II\oplus\HHH$ orthogonally with respect to $B$, and the
Poincar\'{e} polynomial of $\SSS/\II$ is given by
$\sum_{d=1}^\infty(\dim \HHH_d)
t^d=\prod_{i=1}^r(1+t+\cdots+t^{d_i-1})$.
In particular, $\HHH_d=0$ for $d>m$ and $\dim \HHH_m=1$.\\
(4) The polynomial
\begin{equation}\label{E:f0}
f_0=\prod_{\alpha\in\Phi^+}\alpha
\end{equation}
is in $\HHH_m$, and $\HHH=\{\partial_f f_0|f\in \SSS\}$.\\
(5) For $f\in \SSS$, $f\in \II$ if and only if $\partial_f f_0=0$.
In particular,
for $f\in\SSS_m$ we have $f\in \II_m$ if and only if $B(f,f_0)=0$.\\
(6) $f\in \SSS$ is $W$-skew if and only if $f\in f_0\JJ$. (Here $f$
is called $W$-skew if $w\cdot f=\sgn(w)f$ for every $w\in W$.)
\end{theorem}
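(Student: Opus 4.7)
The plan is to recognize this as a package of classical results about $W$ acting as a finite real reflection group on $\Lt$. Parts (1) and (2) are the Chevalley--Shephard--Todd theorem together with its attendant numerology. For (1), I would choose $F_1,\ldots,F_r$ as a minimal homogeneous generating set of the ideal $\II=\JJ_+\SSS$; Chevalley's averaging argument then shows that they are automatically algebraically independent, generate $\JJ$ as an algebra, and make $\SSS$ a free module over $\JJ=\RR[F_1,\ldots,F_r]$. This yields $\SSS\cong\JJ\otimes\HHH$ as graded $\JJ$-modules, the engine for everything else. For (2), uniqueness of the $d_i$ follows from the Hilbert series $P_\JJ(t)=\prod(1-t^{d_i})^{-1}$; evaluating $P_{\SSS/\II}(t)=\prod(1+t+\cdots+t^{d_i-1})$ at $t=1$ gives $\prod d_i=\dim(\SSS/\II)=|W|$, the last equality because $\SSS/\II$ carries the regular representation of $W$. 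The identity $\sum(d_i-1)=m$ is the classical count equating $\sum(d_i-1)$ with the number of reflections in $W$, which for a Weyl group is $|\Phi^+|=m$. For (3), the adjointness $B(gu,v)=B(u,\partial_g v)$ gives $\HHH=\II^\perp$, so positive-definiteness of $B$ yields the orthogonal direct sum, and the Poincar\'{e} series formula is immediate from $P_\HHH=P_\SSS/P_\JJ$.

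I would dispose of (6) next to avoid circularity later. One direction is immediate from multiplicativity of $\sgn$. Conversely, if $f$ is $W$-skew, then for each $\alpha\in\Phi^+$ the reflection $s_\alpha$ sends $f$ to $-f$, so $f$ vanishes identically on $\ker\alpha$; hence $\alpha$ divides $f$ in the UFD $\SSS$, and since distinct positive roots are pairwise linearly independent, $f_0=\prod_{\alpha\in\Phi^+}\alpha$ divides $f$ and $f/f_0$ is $W$-invariant. For (4), $f_0$ itself is $W$-skew because a simple reflection permutes $\Phi^+$ with exactly one sign flip. For any $g\in\JJ_+$ the operator $\partial_g$ is $W$-equivariant, so $\partial_g f_0$ is $W$-skew of degree $m-\deg g<m$; by (6) it would be divisible by $f_0$, and therefore vanishes. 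Thus $f_0\in\HHH_m$, and $\dim\HHH_m=1$ from (3) forces $\HHH_m=\RR f_0$. The inclusion $\{\partial_f f_0:f\in\SSS\}\subset\HHH$ follows because $f\mapsto\partial_f$ is multiplicative and $\partial_h f_0=0$ for $h\in\JJ_+$, so for $g\in\JJ_+$, $\partial_g(\partial_f f_0)=\partial_f(\partial_g f_0)=0$. Equality is then obtained by matching dimensions in each degree, using the palindromic symmetry $\dim\HHH_d=\dim\HHH_{m-d}$ built into the Poincar\'{e} series of (3).

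For (5), $f\in\II$ iff $B(f,h)=0$ for every $h\in\HHH$, iff $B(f,\partial_g f_0)=0$ for all $g\in\SSS$ by (4); two applications of adjointness give $B(f,\partial_g f_0)=B(gf,f_0)=B(g,\partial_f f_0)$, so the condition is $B(g,\partial_f f_0)=0$ for all $g$, i.e., $\partial_f f_0=0$ by non-degeneracy of $B$. When $f\in\SSS_m$, $\partial_f f_0$ lies in $\SSS_0$ and equals the constant $(\partial_f f_0)(0)=B(f,f_0)$, yielding the second assertion. The main obstacle in this plan is the Chevalley--Shephard--Todd foundation underlying (1) and (3): that $\JJ$ is a polynomial algebra on $r$ generators and $\SSS$ is free over $\JJ$ with quotient carrying the regular representation of $W$. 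Every subsequent identity rests on either the Hilbert series factorisation $P_\SSS=P_\JJ\cdot P_\HHH$ or the orthogonality $\HHH=\II^\perp$, both of which flow from this structural result. A secondary subtlety is the dimension count underlying Steinberg's characterisation $\HHH=\{\partial_f f_0\}$, which ultimately depends on the Poincar\'{e} duality in $\SSS/\II$ asserting that the multiplication pairing $\HHH_d\times\HHH_{m-d}\to\HHH_m\cong\RR$ is perfect.
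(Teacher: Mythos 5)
The paper does not prove this theorem: it is stated as a package of standard facts from the invariant theory of finite reflection groups, with the reader referred to Helgason, Kane, and Smith for details. Your outline follows exactly the classical route taken in those references --- Chevalley's theorem for (1)--(2), the adjointness $B(g,fh)=B(\partial_f g,h)$ to get $\HHH=\II^{\perp}$ in (3), the divisibility argument for skew polynomials in (6), and Steinberg's description of $\HHH$ as $\partial_{\SSS}f_0$ in (4)--(5) --- so there is nothing to compare against in the paper itself, and the substance of your sketch is correct. One step deserves a caution: for the equality $\HHH=\{\partial_f f_0\}$ you invoke ``matching dimensions using the palindromic symmetry $\dim\HHH_d=\dim\HHH_{m-d}$,'' but by itself this only bounds the image of $f\mapsto\partial_f f_0$ on $\SSS_d$ from above (its kernel visibly contains $\II_d$, so the image has dimension at most $\dim\HHH_d=\dim\HHH_{m-d}$); to conclude surjectivity one must show the kernel is exactly $\II_d$, which is essentially part (5) and hence risks circularity. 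You correctly identify the missing input as the perfectness of the multiplication pairing $\HHH_d\times\HHH_{m-d}\to\HHH_m\cong\RR$ on the coinvariant algebra (a Gorenstein/Poincar\'e-duality property, available here e.g.\ via Borel's theorem identifying $\SSS/\II$ with $H^*(G/T,\RR)$); with that in hand, $f\notin\II_d$ forces $B(fg,f_0)=B(g,\partial_f f_0)\neq0$ for some $g$, closing the argument. So the proposal is a faithful reconstruction of the standard proofs, with its one genuine dependency explicitly and accurately flagged rather than overlooked.
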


We refer to the polynomial $f_0\in\HHH_m$ given in \eqref{E:f0} as
the \emph{fundamental harmonic polynomial}. Given $f\in\SSS_m$, it
is important to know when $f\in\II_m$. Theorem \ref{T:invariant} (5)
provides a characterization using $f_0$ and the inner product $B$.
Although it is not difficult to compute $B$ using \eqref{E:B}, the
following generalization of Theorem \ref{T:invariant} (5) will be
more convenient.

\begin{proposition}\label{P:inner}
$\II_m=f_0^\bot$ with respect to any $W$-invariant inner product on
$\SSS_m$.
\end{proposition}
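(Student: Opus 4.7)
The plan is to exploit the $W$-module structure of $\SSS_m$. By Theorem~\ref{T:invariant}(3) we have the orthogonal decomposition $\SSS_m = \II_m \oplus \HHH_m$, both summands being $W$-invariant, with $\HHH_m = \RR f_0$ one-dimensional. Since $f_0 = \prod_{\alpha \in \Phi^+} \alpha$ is $W$-skew, the summand $\HHH_m$ carries the sign representation $\sgn$ of $W$.

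Next I would show that $\II_m$ contains no copy of $\sgn$. By Theorem~\ref{T:invariant}(6) the full sign isotypic component of $\SSS$ equals $f_0 \JJ$; intersecting with $\SSS_m$ leaves only $f_0 \cdot \JJ_0 = \RR f_0 = \HHH_m$. Hence the entire sign isotypic piece of $\SSS_m$ lies inside $\HHH_m$, and $\II_m$ contains no sign sub-representation.

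Now let $\langle \cdot, \cdot \rangle$ be an arbitrary $W$-invariant inner product on $\SSS_m$ and define $L\colon \SSS_m \to \RR$ by $L(v) = \langle v, f_0 \rangle$. Using the $W$-invariance of the inner product together with $w \cdot f_0 = \sgn(w) f_0$, one finds
\[
L(w \cdot v) = \langle w \cdot v, f_0 \rangle = \langle v, w^{-1} \cdot f_0 \rangle = \sgn(w)\, L(v),
\]
so $L$ is $W$-equivariant once $\RR$ is given the $\sgn$-action. Restricted to the $W$-submodule $\II_m$, the previous step forces the $W$-equivariant map $L|_{\II_m}$ into the sign representation to vanish (by complete reducibility of $\RR[W]$-modules, a nonzero map would produce a sign subrepresentation of $\II_m$). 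Therefore $\II_m \subseteq f_0^\perp$. Both subspaces have codimension one in $\SSS_m$ — $\II_m$ because $\dim \HHH_m = 1$, and $f_0^\perp$ because $\langle f_0, f_0 \rangle > 0$ — so they must coincide.

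There is no real obstacle in this argument; the only substantive point is recognizing $\HHH_m$ as precisely the sign isotypic component of $\SSS_m$, which is immediate from Theorem~\ref{T:invariant}(6). The proposition is thus reduced to a one-line application of Schur's lemma once the $W$-module picture is set up correctly.
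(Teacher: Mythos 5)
Your proof is correct and follows essentially the same route as the paper's: both arguments hinge on Theorem~\ref{T:invariant}(6) identifying $\RR f_0$ as the full sign-isotypic component of $\SSS_m$, from which the orthogonality of $\II_m$ and $f_0$ under any $W$-invariant inner product follows by standard representation theory. Your version merely unpacks, via the equivariant functional $L$ and Schur's lemma, the paper's one-line observation that the orthogonal complement of an isotypic component does not depend on the choice of invariant inner product.
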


\begin{proof}
By Theorem \ref{T:invariant} (6), every $W$-skew polynomial in
$\SSS_m$ is a multiple of $f_0$. So for the natural representation
of $W$ in $\SSS_m$, the isotypic component of the irreducible
representation $w\mapsto\sgn(w)$ is $\RR f_0$. Hence the orthogonal
complement of $\RR f_0$ is independent of the choice of the
$W$-invariant inner product on $\SSS_m$.
\end{proof}

In view of Proposition \ref{P:inner}, the fundamental harmonic
polynomial $f_0$ plays an important role. For later reference, we
give below explicit expressions for $f_0$ and a $W$-invariant inner
product on $\SSS$ for classical groups. Before doing this, we define
a polynomial $\lambda_0$ in $\RR[x_1,\ldots,x_n]$ as
\begin{equation}\label{E:lambda0}
\lambda_0(x)=\prod_{1\leq i<j\leq n}(x_i-x_j)=\sum_{\sigma\in
S_n}\sgn(\sigma)\prod_{i=1}^{n}x^{n-i}_{\sigma(i)},
\end{equation}
where $x=(x_1,\ldots,x_n)$. This polynomial will appear in all cases
of classical groups.

\begin{example}\label{Ex:A}
Let $G=U(n)$, let $T=U(1)^n=\{t=\diag(t_1,\ldots,t_n)|t_i\in U(1)\}$
be the standard maximal torus and $\mathfrak{t}=\{x=\diag(\ii
x_1,\ldots,\ii x_n)|x_i\in\RR\}$. Then $\SSS$ can be identified with
the polynomial algebra $\RR[x_1,\ldots,x_n]$, and a character
$\chi(t)=t_1^{m_1}\cdots t_n^{m_n}$ with the linear form
$m_1x_1+\cdots+m_nx_n$. The root system is given by
$$\Phi=\{\pm(x_i-x_j)|1\leq i<j\leq n\},$$ and $\Phi^+$ can be chosen as
$$\Phi^+=\{x_i-x_j|1\leq i<j\leq n\}.$$ So in this case
$$f_0(x)=\lambda_0(x).$$ We have
$\deg f_0=|\Phi^+|=n(n-1)/2$ and $W\cong S_n$. There is a unique
$W$-invariant inner product on $\SSS$ for which the monomials
$x_1^{m_1}\cdots x_n^{m_n}$ form an othornormal basis. The basic
generators $F_1,\ldots,F_n$ of $\II$ can be chosen as the elementary
symmetric polynomials in $x_1,\ldots,x_n$. \qed\end{example}

\begin{example}\label{Ex:C}
Let $G=Sp(n)$. $T=U(1)^n$ is a maximal torus and
$\mathfrak{t}=\{x=\diag(\ii x_1,\ldots,\ii x_n)|x_i\in\RR\}$. So
$\SSS\cong\RR[x_1,\ldots,x_n]$. The root system  is given by
$$\Phi=\{\pm x_i\pm x_j|1\leq i<j\leq
n\}\cup\{\pm2x_i|1\leq i\leq n\},$$ and $\Phi^+$ can be chosen as
$$\Phi^+=\{x_i\pm x_j|1\leq i<j\leq n\}\cup\{2x_i|1\leq i\leq n\}.$$
So
$$f_0(x)=2^n\lambda_0(x_1^2,\ldots,x_n^2)\prod_{i=1}^nx_i.$$ We
have $\deg f_0=|\Phi^+|=n^2$ and $W\cong (\ZZ/2\ZZ)^n\rtimes S_n$.
The $W$-invariant inner product on $\SSS$ can be chosen as in
Example \ref{Ex:A}. The basic generators $F_1,\ldots,F_n$ of $\II$
can be chosen as the elementary symmetric polynomials in
$x_1^2,\ldots,x_n^2$. \qed\end{example}

\begin{example}\label{Ex:D}
Let $G=SO(2n)$. $T=SO(2)^n$ is a maximal torus and
$\Lt=\left\{x=\diag\left(\begin{pmatrix}0&x_1\\-x_1&0\\\end{pmatrix},\ldots,
\begin{pmatrix}0&x_n\\-x_n&0\\\end{pmatrix}\right)\right\}$. So
$\SSS\cong\RR[x_1,\ldots,x_n]$. The root system  is given by
$$\Phi=\{\pm x_i\pm x_j|1\leq i<j\leq n\},$$
and $\Phi^+$ can be chosen as $$\Phi^+=\{x_i\pm x_j|1\leq i<j\leq
n\}.$$ So $$f_0(x)=\lambda_0(x_1^2,\ldots,x_n^2).$$ We have $\deg
f_0=|\Phi^+|=n(n-1)$ and $W\cong (\ZZ/2\ZZ)^{n-1}\rtimes S_n$.  The
$W$-invariant inner product on $\SSS$ can be chosen as in Example
\ref{Ex:A}. The basic generators $F_1,\ldots,F_n$ of $\II$ can be
chosen as the first $n-1$ elementary symmetric polynomials in
$x_1^2,\ldots,x_n^2$ and $\prod_{i=1}^nx_i$. \qed\end{example}

\begin{example}\label{Ex:B}
Let $G=SO(2n+1)$. $T=SO(2)^n\times\{1\}$ is a maximal torus and
$\Lt=\left\{x=\diag\left(\begin{pmatrix}0&x_1\\-x_1&0\\\end{pmatrix},\ldots,
\begin{pmatrix}0&x_n\\-x_n&0\\\end{pmatrix},0\right)\right\}$. So
$\SSS\cong\RR[x_1,\ldots,x_n]$. The root system is given by
$$\Phi=\{\pm x_i\pm x_j|1\leq i<j\leq n\}\cup\{\pm x_i|1\leq i\leq n\},$$
and $\Phi^+$ can be chosen as $$\Phi^+=\{x_i\pm x_j|1\leq i<j\leq
n\}\cup\{x_i|1\leq i\leq n\}.$$ So
$$f_0(x)=\lambda_0(x_1^2,\ldots,x_n^2)\prod_{i=1}^nx_i.$$ We have
$\deg f_0=|\Phi^+|=n^2$ and $W\cong (\ZZ/2\ZZ)^n\rtimes S_n$. The
$W$-invariant inner product on $\SSS$ can be chosen as in Example
\ref{Ex:A}. The basic generators $F_1,\ldots,F_n$ of $\II$ can be
chosen to be the same as in Example \ref{Ex:C}. \qed\end{example}

\section{Characteristic polynomials of subspaces} \label{Kar-pol}

Let $\rho$ be a representation of $G$ in a finite dimensional real
vector space $\UU$, and let $\UU^T$ be the subspace of $T$-invariant
vectors in $\UU$. Let $\VV\subset\UU$ be a $T$-invariant subspace
containing $\UU^T$. We shall now define the characteristic
polynomial $f_\VV$ of $\VV$ (up to a sign which depends on the
orientation of $\UU/\VV$).

Let $\WW$ be a $T$-invariant subspace of $\UU$ complementary to
$\VV$. Every real irreducible representation of $T$ is either
trivial or $2$-dimensional. In the latter case, it is equivalent to
the the real $T$-module associated to some nontrivial character
$\chi$ of $T$. Since $\UU^T\subset\VV$, there is no trivial
subrepresentation of $T$ in $\WW$. So $\WW$ can be decomposed as a
direct sum
\begin{equation}\label{E:decompose}
\WW=\bigoplus_{i=1}^d\WW_i, \quad d=\frac{1}{2}\dim\WW,
\end{equation}
of $2$-dimensional $T$-irreducible subspaces. Note that the real
$T$-modules associated to $\chi$ and $-\chi$ are equivalent. To
associate to $\WW_i$ a unique $\chi_i\in\XX(T)\setminus\{0\}$, we
choose and fix an orientation on $\WW_i$. Then there is a unique
$\chi_i\in\XX(T)\setminus\{0\}$ for which there exists a
$T$-equivariant orientation preserving isomorphism
$\sigma_i:(\RR^2,\chi_i)\rightarrow\WW_i$, where $(\RR^2,\chi_i)$ is
the real $T$-module associated to $\chi_i$ with the standard
orientation. The orientations of the $\WW_i$'s induce an orientation
on $\WW\cong\UU/\VV$. It is easy to see that the product of the
$\chi_i$'s depends only on the induced orientation on $\WW$, and
changes sign if the orientation is reversed.

\begin{definition}
Given the orientation on $\WW\cong\UU/\VV$ as above, we define the
\emph{characteristic polynomial} $f_\VV\in\SSS_d$ of $\VV$ by
$$f_\VV=\prod_{i=1}^d\chi_i.$$
\end{definition}

The equivalence $\sigma_i$ above induces a complex structure $J_i$
on $\WW_i$, which is independent of the choice of $\sigma_i$ and is
determined by the orientation and the $T$-action on $\WW_i$. The
direct sum
\begin{equation}\label{E:JW}
J_\WW=\bigoplus_{i=1}^dJ_i
\end{equation}
is a complex structure on $\WW$ compatible with the orientation. For
any nonzero vectors $w_i\in\WW_i$, the ordered basis
\begin{equation}\label{E:basis-W}
\{w_1,J_\WW w_1,\ldots,w_d,J_\WW w_d\}
\end{equation}
of $\WW$ is compatible with the orientation.

\begin{remark}\label{R:1}
If $\UU$ admits a complex structure such that the $G$-action is
complex linear, and if $\VV\subset\UU$ is a complex subspace, then
we can choose $\WW$ to be a complex subspace. Thus $\WW$ has the
complex decomposition
$$\WW=\bigoplus_{\chi\in\XX(T)}\WW_\chi,$$
where $\WW_\chi$ is the $\chi$-isotypic component of $\WW$. As
$\UU^T\subset\VV$, we have $\WW_0=0$. Each $\WW_\chi$ can be further
decomposed as a direct sum of complex lines, which serve as the
$\WW_i$'s in \eqref{E:decompose}. If the orientation on each of
these complex lines is chosen to be the one induced by the original
complex structure, then
$$f_\VV=\prod_{\chi\in\XX(T)}\chi^{\dim_\CC\WW_\chi},$$
and $J_\WW$ coincides with the original complex structure on $\WW$.
\end{remark}

We now consider the trivial real vector bundle $E_\UU=G/T\times\UU$
over the flag manifold $G/T$. For any $T$-invariant subspace $\LL$
of $\UU$, there is a well-defined subbundle $E_\LL$ of $E_\UU$ whose
fiber at $gT$ is $\rho(g)(\LL)$. In particular, $\VV$ and $\WW$ are
$T$-invariant, and so we have the subbundles $E_\VV$ and $E_\WW$.
Since $\UU=\rho(g)(\VV)\oplus\rho(g)(\WW)$ for every $g\in G$, we
have $E_\UU=E_\VV\oplus E_\WW$. The $T$-action on $\WW$ preserves
the orientation. So the orientation on $\WW$ induces an orientation
on $E_\WW\cong E_\UU/E_\VV$, and we can consider the Euler class
$e(E_\WW)=e(E_\UU/E_\VV)\in H^{2d}(G/T,\RR)$. Since $J_\WW$ is
$T$-invariant, it induces a complex structure on $E_\WW$. So we can
also consider the top Chern class $c_\mathrm{top}(E_\WW)\in
H^{2d}(G/T,\RR)$. Since the orientation and the complex structure
are compatible, $e(E_\WW)=c_\mathrm{top}(E_\WW)$.

\begin{theorem}\label{T:class}
We have $e(E_\UU/E_\VV)=\varphi(f_\VV)$, where $\varphi$ is the
homomorphism \eqref{E:homo}.
\end{theorem}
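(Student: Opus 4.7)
The plan is to reduce the claim to the multiplicativity of Euler classes (equivalently, top Chern classes) on a direct sum of complex line bundles, and then invoke the very definition of $\varphi$ on the characters $\chi_i$.

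First I would record the reduction $E_\UU/E_\VV\cong E_\WW$. Since $\UU=\rho(g)\VV\oplus\rho(g)\WW$ for every $g\in G$, we have the internal direct sum $E_\UU=E_\VV\oplus E_\WW$ of vector bundles over $G/T$, so the quotient map restricts to a bundle isomorphism $E_\WW\to E_\UU/E_\VV$. Next, I would identify $E_\WW$ with the associated bundle $G\times_T\WW$: the map $G\times\WW\to G/T\times\UU$, $(g,w)\mapsto (gT,\rho(g)w)$, factors through the diagonal $T$-action and gives an isomorphism $G\times_T\WW\xrightarrow{\cong}E_\WW$ of (oriented) real vector bundles. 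Using the $T$-invariant decomposition \eqref{E:decompose}, this splits as
\begin{equation*}
E_\WW\;\cong\;\bigoplus_{i=1}^d G\times_T\WW_i.
\end{equation*}

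The key step is to identify each summand $G\times_T\WW_i$, as an oriented real rank-$2$ bundle, with the underlying real bundle of the complex line bundle $L_{\chi_i}=G\times_{\chi_i}\CC$. By construction of $f_\VV$, we have a $T$-equivariant, orientation-preserving linear isomorphism $\sigma_i:(\RR^2,\chi_i)\to\WW_i$; applying $G\times_T(-)$ yields an isomorphism $G\times_T(\RR^2,\chi_i)\to G\times_T\WW_i$ of oriented real bundles. Since $(\RR^2,\chi_i)$ is exactly the underlying oriented real $T$-module of $(\CC,\chi_i)$, the bundle $G\times_T(\RR^2,\chi_i)$ is the underlying real bundle of $L_{\chi_i}$. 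Moreover the complex structure $J_i$ on $\WW_i$ matches the one inherited from $\CC$, so $J_\WW$ in \eqref{E:JW} exhibits $E_\WW$ as a complex bundle whose total Chern class satisfies $e(E_\WW)=c_{\mathrm{top}}(E_\WW)$.

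Finally, by multiplicativity of the Euler class (equivalently, the top Chern class) on direct sums of complex line bundles,
\begin{equation*}
e(E_\UU/E_\VV)\;=\;e(E_\WW)\;=\;\prod_{i=1}^d e(L_{\chi_i})\;=\;\prod_{i=1}^d c_1(L_{\chi_i}).
\end{equation*}
By the definition of the homomorphism in \eqref{E:homo2}--\eqref{E:homo}, $c_1(L_{\chi_i})=\varphi(\chi_i)$, and since $\varphi$ is a ring homomorphism,
\begin{equation*}
\prod_{i=1}^d\varphi(\chi_i)\;=\;\varphi\!\left(\prod_{i=1}^d\chi_i\right)\;=\;\varphi(f_\VV),
\end{equation*}
giving the theorem.

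The only subtle point, and the one I would be most careful about, is the compatibility of orientations: one must check that the product orientation coming from the chosen orientations of the $\WW_i$ agrees with the orientation on $\WW$ that is used to define $f_\VV$, and that the identification $G\times_T(\RR^2,\chi_i)\cong L_{\chi_i}$ preserves orientation. Both are by construction, but they are the place where a sign error would enter; once they are in hand, the rest is formal.
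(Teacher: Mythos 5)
Your proposal is correct and follows essentially the same route as the paper: decompose $E_\WW$ into the line bundles determined by the $\WW_i$, identify each with $L_{\chi_i}$ via the $T$-equivariant isomorphism $\sigma_i$ (the paper writes this map explicitly as $(g,z)\mapsto(gT,\rho(g)(zw_i))$), and conclude by multiplicativity of the top Chern class together with the definition of $\varphi$. The orientation compatibility you flag is exactly the point the paper's setup in Section \ref{Kar-pol} arranges in advance, so nothing further is needed.
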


\begin{proof}
We view $E_\WW$ as a complex vector bundle and prove that
$c_\mathrm{top}(E_\WW)=\varphi(f_\VV)$. Let $L_i$ be the subbundle
of $E_\UU$ whose fiber at $gT$ is $\rho(g)(\WW_i)$. The complex
structure $J_i$ on $\WW_i$ induces a complex structure on $L_i$. By
\eqref{E:decompose}, $E_\WW\cong\bigoplus_{i=1}^dL_i$ as complex
vector bundles. So we have
\begin{equation}\label{E:ctop1}
c_{\mathrm{top}}(E_\WW)=\prod_{i=1}^{d}c_1(L_i),
\end{equation}
where $c_1(L_i)$ is the first Chern class of $L_i$. For
$\chi\in\XX(T)$ let $L_\chi=G\times_\chi\CC$ be the complex line
bundle over $G/T$ induced by $\chi$. We claim that $L_i\cong
L_{\chi_i}$ as complex line bundles. Indeed, given any nonzero
vector $w_i\in\WW_i$, the map $G\times\CC\rightarrow L_i$ defined by
$$(g,z)\mapsto(gT,\rho(g)(zw_i))$$ induces an equivalence
between $L_{\chi_i}$ and $L_i$. So by the definition of $\varphi$,
we have
\begin{equation}\label{E:ctop2}
c_1(L_i)=c_1(L_{\chi_i})=\varphi(\chi_i).
\end{equation}
From \eqref{E:ctop1} and \eqref{E:ctop2} we deduce that
$$c_{\mathrm{top}}(E_\WW)=\prod_{i=1}^d\varphi(\chi_i)
=\varphi(\prod_{i=1}^d\chi_i)=\varphi(f_\VV).$$
\end{proof}

Set $m=\frac{1}{2}\dim G/T$. We shall be mainly interested in the
case $d=m$. In this case, it will be useful to know the explicit
value of $e(E_\UU/E_\VV)([G/T])$, where $[G/T]\in H_{2m}(G/T,\ZZ)$
is the fundamental class of $G/T$ with respect to some orientation
on $G/T$. We fix the orientation on $G/T$ as follows. Let $\Lg$ and
$\Lt$ be the Lie algebras of $G$ and $T$, respectively, and let
$\Phi\subset\XX(T)$ be the root system of $G$ and
$\Phi^+=\{\alpha_1,\ldots,\alpha_m\}$ a system of positive roots.
Let $\Lg_\CC=\Lt_\CC\oplus\bigoplus_{\alpha\in\Phi}\Lg_\alpha$ be
the root space decomposition of the complexification of $\Lg$, and
set $\Lm=\Lg\cap(\bigoplus_{\alpha\in\Phi}\Lg_\alpha)$. Then there
exist $X_\alpha\in\Lg_\alpha$ such that
\begin{equation}\label{E:basis-m}
\{X_{\alpha_1}-X_{-\alpha_1},
\ii(X_{\alpha_1}+X_{-\alpha_1}),\ldots,X_{\alpha_m}-X_{-\alpha_m},
\ii(X_{\alpha_m}+X_{-\alpha_m})\}
\end{equation}
is a basis of $\Lm$. For $\alpha\in\Phi^+$, let
$\Lm_\alpha=\mathrm{span}\{X_\alpha-X_{-\alpha},
\ii(X_\alpha+X_{-\alpha})\}$, and let $J_\alpha$ be the complex
structure on $\Lm_\alpha$ defined by
$$J_\alpha(X_\alpha-X_{-\alpha})=\ii(X_\alpha+X_{-\alpha}), \quad
J_\alpha(\ii(X_\alpha+X_{-\alpha}))=-(X_\alpha-X_{-\alpha}).$$ Then
$\Lm=\bigoplus_{\alpha\in\Phi^+}\Lm_\alpha$, and
$J_\Lm=\bigoplus_{\alpha\in\Phi^+}J_\alpha$ is a complex structure
on $\Lm$, which induces an orientation on $\Lm$ compatible with the
ordered basis \eqref{E:basis-m}. For $g\in G$, define the map
$\tau_g:\Lm\rightarrow G/T$ by
\begin{equation}\label{E:tau}
\tau_g(X)=ge^XT.
\end{equation}
Then the differential $(d\tau_g)_0:\Lm\rightarrow T_{gT}(G/T)$ is
invertible and induces an orientation on $T_{gT}(G/T)$, which
depends only on $gT$. This induces an orientation on $G/T$.

\begin{definition}
If $d=m$, we define the \emph{characteristic number} $C_\VV$ of
$\VV$ by $$C_\VV=\frac{\langle f_\VV,f_0\rangle}{\langle
f_0,f_0\rangle}|W|,$$ where $W$ is the Weyl group,
$\langle\cdot,\cdot\rangle$ is a $W$-invariant inner product on
$\SSS_m$, and $f_0$ is the fundamental harmonic polynomial defined
in \eqref{E:f0}.

\end{definition}

\begin{proposition}\label{P:independent}
$C_\VV$ is independent of the choice of $\langle\cdot,\cdot\rangle$.
\end{proposition}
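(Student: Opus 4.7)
The plan is to show that the ratio $\langle f_\VV, f_0\rangle/\langle f_0, f_0\rangle$ is intrinsically determined by $f_\VV$, without reference to any particular $W$-invariant inner product, because it equals the coefficient of $f_0$ in a canonical decomposition of $\SSS_m$.

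First, I would invoke Proposition~\ref{P:inner}, which asserts that $\II_m = f_0^\perp$ with respect to any $W$-invariant inner product on $\SSS_m$. Combined with Theorem~\ref{T:invariant}(3), which gives $\SSS_m = \II_m \oplus \HHH_m$ with $\dim\HHH_m = 1$ and $f_0 \in \HHH_m$, this says that we have a direct sum decomposition
\begin{equation*}
\SSS_m = \II_m \oplus \RR f_0
\end{equation*}
as vector spaces, and that for every $W$-invariant inner product this sum is orthogonal. Crucially, this decomposition itself does not depend on the inner product.

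Next, for a given $f_\VV \in \SSS_m$, write $f_\VV = g + c f_0$ with $g \in \II_m$ and $c \in \RR$. The scalar $c$ is intrinsic to $f_\VV$: if also $f_\VV = g' + c' f_0$ with $g' \in \II_m$, then $(c - c')f_0 = g' - g \in \II_m$, forcing $c = c'$ because $f_0 \notin \II_m$ (indeed $f_0 \in \HHH_m$ and $\II_m \cap \HHH_m = 0$). Now for any $W$-invariant inner product $\langle\cdot,\cdot\rangle$ on $\SSS_m$, orthogonality of the decomposition gives $\langle g, f_0\rangle = 0$, hence
\begin{equation*}
\langle f_\VV, f_0 \rangle = c\,\langle f_0, f_0\rangle,
\end{equation*}
so $\langle f_\VV, f_0\rangle / \langle f_0, f_0\rangle = c$ is independent of the inner product. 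Multiplying by the fixed integer $|W|$ gives the result.

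There is no serious obstacle here; the only subtlety is to note that Proposition~\ref{P:inner} is exactly the input that makes the orthogonal projection onto $\RR f_0$ coincide with the (inner-product-free) projection along $\II_m$, and this is precisely what uncouples the numerical ratio from the choice of $W$-invariant inner product.
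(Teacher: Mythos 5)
Your proof is correct and follows essentially the same route as the paper: both arguments rest on Proposition~\ref{P:inner} together with the decomposition $\SSS_m=\II_m\oplus\RR f_0$, the paper phrasing this as the linear functional $f\mapsto\frac{\langle f,f_0\rangle}{\langle f_0,f_0\rangle}|W|$ being determined by its value $|W|$ on $f_0$ and its vanishing on $\II_m$, while you equivalently extract the intrinsic coefficient $c$ of $f_0$ in $f_\VV$.
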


\begin{proof}
The linear functional $\SSS_m\rightarrow\RR$ sending $f$ to
$\frac{\langle f,f_0\rangle}{\langle f_0,f_0\rangle}|W|$ takes the
value $|W|$ at $f_0$ and vanishes on $f_0^\bot=\II_m$.
\end{proof}

We shall prove that, with respect to the above orientation on $G/T$,
$e(E_\WW)([G/T])=C_\VV$. We first prove a special case. Let $E_\Lm$
be the subbundle of the trivial bundle $G/T \times \Lg$
corresponding to the $T$-invariant subspace $\Lm$.

\begin{lemma}\label{L:1}
For the adjoint representation of $G$ in $\Lg$, we have $f_\Lt=f_0$
and $e(E_\Lm)([G/T])=C_\Lt=|W|$.
\end{lemma}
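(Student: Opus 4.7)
The plan is to prove the lemma in three steps: first identify $f_\Lt$ with $f_0$; second, deduce $C_\Lt=|W|$ by substitution; and third, identify $E_\Lm$ with the tangent bundle of $G/T$ as oriented real vector bundles and conclude via the Poincar\'e--Hopf formula together with the fact $\chi(G/T)=|W|$ recalled at the end of Subsection~2.2.

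For the first step, I would use the decomposition $\Lm=\bigoplus_{\alpha\in\Phi^+}\Lm_\alpha$ as the $T$-invariant complement of $\Lt$ in $\Lg$, and apply the construction of the excerpt to it. A direct computation on the ordered basis $\{X_\alpha-X_{-\alpha},\ii(X_\alpha+X_{-\alpha})\}$ of $\Lm_\alpha$ shows that, for $H\in\Lt$, the operator $\Ad(e^H)$ acts as the rotation of angle $\alpha(H)$, which is complex-linear with respect to $J_\alpha$. Sending the standard basis of $\RR^2$ to this ordered basis thus gives a $T$-equivariant orientation-preserving isomorphism $(\RR^2,\alpha)\to\Lm_\alpha$, so the character assigned to the oriented $\Lm_\alpha$ is $\alpha$ itself. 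Multiplying over $\Phi^+$ yields $f_\Lt=\prod_{\alpha\in\Phi^+}\alpha=f_0$, and hence $C_\Lt=(\langle f_0,f_0\rangle/\langle f_0,f_0\rangle)|W|=|W|$.

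The third step is the substantive one. I would establish an oriented bundle isomorphism $E_\Lm\cong T(G/T)$ as a composite $E_\Lm\cong G\times_T\Lm\cong T(G/T)$. The first arrow sends $[g,X]$ to $(gT,\Ad(g)X)$; well-definedness uses only $T$-invariance of $\Lm$. The second is the standard isomorphism coming from the maps $\tau_g$ of~\eqref{E:tau}, sending $[g,X]$ to $(d\tau_g)_0(X)$; its well-definedness uses the identity $\tau_{gt}=\tau_g\circ\Ad(t)$. Granted this, $e(E_\Lm)([G/T])=e(T(G/T))([G/T])=\chi(G/T)=|W|$.

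The main obstacle is orientation bookkeeping. The orientation on $E_\Lm$ comes from the $T$-invariant complex structure $J_\Lm$ transported along $\Ad(g)$, while the orientation on $T(G/T)$ was defined by pushing the $J_\Lm$-orientation of $\Lm$ forward along $(d\tau_g)_0$; both are images of the same orientation on $\Lm$, so the pointwise identification $\Ad(g)X\leftrightarrow(d\tau_g)_0(X)$ is orientation-preserving. One still has to check that these pointwise constructions descend from $G$ to $G/T$, which reduces to the fact that each $\Ad(t)$ with $t\in T$ commutes with $J_\Lm$ on $\Lm$. This in turn is immediate because $\Ad(t)|_{\Lm_\alpha}$ was shown in the first step to be a rotation in the complex structure $J_\alpha$, and rotations are complex-linear with respect to the complex structure from which they arise.
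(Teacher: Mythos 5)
Your proof is correct and follows essentially the same route as the paper: identify the character of each oriented $\Lm_\alpha$ as $\alpha$ to get $f_\Lt=f_0$, then show that $(d\tau_g)_0\circ\Ad(g)^{-1}$ gives an orientation-preserving equivalence $E_\Lm\cong T(G/T)$ and invoke $e(T(G/T))([G/T])=\chi(G/T)=|W|$. The paper states this more tersely; your extra care with the descent to $G/T$ and the orientation bookkeeping fills in exactly the details the paper leaves implicit.
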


\begin{proof}
With respect to the complex structure $J_\alpha$, the $T$-action on
$\Lm_\alpha$ is complex and is equivalent to $\alpha$. So
$f_\Lt=\prod_{\alpha\in\Phi^+}\alpha=f_0$ and $C_\Lt=|W|$.

To finish the proof, we note that the orientation preserving map
$$(d\tau_g)_0\circ
\mathrm{Ad}(g)^{-1}:(E_\Lm)_{gT}=\mathrm{Ad}(g)(\Lm)\rightarrow
T_{gT}(G/T)$$ depends only on $gT$, where $\tau_g$ is the map
defined in \eqref{E:tau}. This induces an orientation preserving
equivalence between $E_\Lm$ and the tangent bundle $T(G/T)$. So
$e(E_\Lm)([G/T])=e(T(G/T))([G/T])=\chi(G/T)=|W|$.
\end{proof}

\begin{theorem}\label{T:number}
We have $e(E_\UU/E_\VV)([G/T])=C_\VV$.
\end{theorem}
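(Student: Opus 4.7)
The plan is to combine Theorem \ref{T:class} with Lemma \ref{L:1} and exploit the one-dimensionality of $\HHH_m$ to reduce the identity to a single already-verified case. More precisely, let me define the linear functional $\psi:\SSS_m\to\RR$ by $\psi(f)=\varphi(f)([G/T])$, so that by Theorem \ref{T:class} what we must prove is $\psi(f_\VV)=C_\VV$.

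First I would observe that $\psi$ vanishes on $\II_m$: this is immediate from Borel's Theorem \ref{T:Borel}, since $\II=\ker\varphi$. Simultaneously, the functional $f\mapsto \frac{\langle f,f_0\rangle}{\langle f_0,f_0\rangle}|W|$ defining $C_\VV$ vanishes on $\II_m$ by Proposition \ref{P:inner} (which identifies $\II_m$ with $f_0^{\bot}$). By Theorem \ref{T:invariant}(3)--(4) we have the orthogonal decomposition $\SSS_m=\II_m\oplus\RR f_0$, so two linear functionals on $\SSS_m$ that vanish on $\II_m$ agree as soon as they agree at $f_0$.

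Next I would evaluate both functionals at $f_0$. By definition of $C_\VV$, the second functional takes the value $|W|$ at $f_0$. For the first, I invoke Lemma \ref{L:1}: taking $\rho$ to be the adjoint representation on $\Lg$ and $\VV=\Lt$, we have $f_\Lt=f_0$ and $e(E_\Lm)([G/T])=|W|$, and so by Theorem \ref{T:class} applied in that case, $\psi(f_0)=\varphi(f_0)([G/T])=e(E_\Lm)([G/T])=|W|$. Therefore both functionals agree at $f_0$, hence agree on $\SSS_m$, which gives $\psi(f_\VV)=C_\VV$ as required.

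The argument is essentially bookkeeping once Theorem \ref{T:class} and Lemma \ref{L:1} are in hand; the only delicate point to double check is that Lemma \ref{L:1} is genuinely valid for the orientation on $G/T$ fixed just before the statement of Theorem \ref{T:number} (so that the Euler characteristic formula $e(T(G/T))([G/T])=\chi(G/T)=|W|$ produces the correct sign), which is exactly why the orientation on $\Lm$ was prescribed using the complex structure $J_\Lm$ coming from the same positive-root data used to define $f_0$. Apart from this sign-bookkeeping, no further computation is needed: the one-dimensionality $\dim\HHH_m=1$ carries all the weight of the argument.
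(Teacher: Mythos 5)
Your proof is correct and follows essentially the same route as the paper: the paper decomposes $f_\VV$ as $\frac{\langle f_\VV,f_0\rangle}{\langle f_0,f_0\rangle}f_0$ plus an element of $\II_m=f_0^\bot\subset\ker(\varphi)$ and then evaluates via Theorem \ref{T:class} and Lemma \ref{L:1}, which is exactly your ``two functionals vanishing on $\II_m$ agree once they agree at $f_0$'' argument in slightly different packaging. Your closing remark about the orientation is also apt, since Lemma \ref{L:1} is indeed where the choice of orientation on $G/T$ via $J_\Lm$ is used.
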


\begin{proof}
By Theorem \ref{T:Borel} and Proposition \ref{P:inner},
$$f_\VV-\frac{\langle f_\VV,f_0\rangle}{\langle
f_0,f_0\rangle}f_0\in f_0^\bot=\II_m\subset\ker(\varphi).$$ So by
Theorem \ref{T:class} and Lemma \ref{L:1}, we have
\begin{align*}
&e(E_\WW)([G/T])=\varphi(f_\VV)([G/T])=\frac{\langle
f_\VV,f_0\rangle}{\langle
f_0,f_0\rangle}\varphi(f_0)([G/T])\\
=&\frac{\langle f_\VV,f_0\rangle}{\langle
f_0,f_0\rangle}e(E_\Lm)([G/T])=\frac{\langle
f_\VV,f_0\rangle}{\langle f_0,f_0\rangle}|W|=C_\VV.
\end{align*}
\end{proof}

\section{Universal subspaces}

Given the representation of $G$ in $\UU$, we say that a linear
subspace $\VV$ of $\UU$ is \emph{universal} if every $G$-orbit in
$\UU$ meets $\VV$. Let $G_\VV$ be the stabilizer of $\VV$ in $G$.

\begin{lemma}\label{L:optimal}
If $\VV$ is universal, then $\dim\VV\geq\dim\UU-\dim G/G_\VV.$
\end{lemma}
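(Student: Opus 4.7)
The plan is to consider the smooth evaluation map
$$
\mu : G \times \VV \longrightarrow \UU, \qquad (g,v) \longmapsto \rho(g)(v).
$$
Universality of $\VV$ is exactly the statement that $\mu$ is surjective: for every $u \in \UU$ there must exist $g \in G$ and $v \in \VV$ with $\rho(g)(v) = u$. A naive dimension count then yields $\dim \UU \leq \dim G + \dim \VV$, which is weaker than what we want. To sharpen this to the bound involving $G/G_\VV$, I would exploit the fact that $\mu$ factors through a smaller manifold, namely the associated fiber bundle $G \times_{G_\VV} \VV$.

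Concretely, let $G_\VV$ act on $G \times \VV$ from the right by $h \cdot (g,v) = (gh^{-1}, \rho(h)(v))$. This action is well defined because $\rho(h)(\VV) = \VV$ for $h \in G_\VV$ by definition of the stabilizer. A one-line computation
$$
\mu(gh^{-1}, \rho(h)(v)) \;=\; \rho(gh^{-1})\rho(h)(v) \;=\; \rho(g)(v) \;=\; \mu(g,v)
$$
shows that $\mu$ is constant on $G_\VV$-orbits, hence descends to a smooth map
$$
\bar\mu : G \times_{G_\VV} \VV \longrightarrow \UU
$$
with the same image as $\mu$.

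If $\VV$ is universal, $\bar\mu$ is surjective. The associated bundle $G \times_{G_\VV} \VV$ is a smooth manifold of dimension
$$
\dim(G/G_\VV) + \dim \VV,
$$
so by Sard's theorem (or the fact that the image of a smooth map from a lower-dimensional manifold has measure zero) we obtain
$$
\dim \UU \;\leq\; \dim(G/G_\VV) + \dim \VV,
$$
which is the desired inequality.

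The only step that needs any verification is the well-definedness of the $G_\VV$-action and the factorization of $\mu$; neither presents a real obstacle, since both follow immediately from the definition of $G_\VV$ and the linearity of $\rho(h)$. The rest is a straightforward dimension count.
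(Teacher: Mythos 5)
Your argument is correct and is essentially identical to the paper's proof: both factor the evaluation map through the associated bundle $G\times_{G_\VV}\VV$ over $G/G_\VV$ and conclude by the dimension count for a surjective smooth map. No gaps.
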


\begin{proof}
Let $G\times_{G_\VV}\VV$ be the vector bundle over $G/G_\VV$ whose
total space is the space of $G_\VV$-orbits in $G\times\VV$ with the
$G_\VV$-action $h\cdot(g,v)=(gh^{-1},\rho(h)(v))$. Then the map
$F:G\times\VV\rightarrow\UU$, $F(g,v)=\rho(g)(v)$ reduces to a map
$\widetilde{F}:G\times_{G_\VV}\VV\rightarrow\UU$. Since $\VV$ is
universal, $F$ and $\widetilde{F}$ are surjective. So $\dim
G/G_\VV+\dim\VV=\dim(G\times_{G_\VV}\VV)\geq\dim\UU$.
\end{proof}

Assume that $\VV$ is $T$-invariant, that is, $T\subset G_\VV$. By
the above lemma, if $\VV$ is universal, then $\dim \VV\geq\dim
\UU-2m$, where $m=\frac{1}{2}\dim G/T$. We say that $\VV$ has
\emph{optimal dimension} if $\dim \VV=\dim \UU-2m$. In this case the
characteristic polynomial $f_\VV$ of $\VV$ has degree $d=m$.

\begin{theorem}\label{T:main}
Let $\rho$ be a representation of $G$ in a finite dimensional real
vector space $\UU$, and let $\VV$ be a $T$-invariant subspace of
$\UU$ such that $\UU^T\subset\VV$. Then $f_\VV\notin\II$ implies
that $\VV$ is universal. In particular, if $\VV$ has optimal
dimension and $C_\VV\neq0$, then $\VV$ is universal.
\end{theorem}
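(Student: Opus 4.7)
The plan is to rephrase universality as the existence of a zero of a canonical section of $E_\UU/E_\VV$, apply the obstruction given by a nonzero Euler class, and then identify this Euler class with $\varphi(f_\VV)$ using Theorem \ref{T:class}. Borel's theorem (Theorem \ref{T:Borel}) then converts the nonvanishing of $\varphi(f_\VV)$ into the ideal-membership statement $f_\VV \notin \II$.

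More concretely, I would fix a $T$-invariant complement $\WW$ of $\VV$ in $\UU$; since $\UU^T \subset \VV$ by hypothesis, $\WW$ has no nonzero $T$-fixed vector, so the decomposition \eqref{E:decompose} and hence $f_\VV$ are well defined, and the complex structure $J_\WW$ of \eqref{E:JW} makes $E_\UU/E_\VV \cong E_\WW$ into an oriented real vector bundle. Given $u \in \UU$, view $u$ as the constant section $gT \mapsto (gT,u)$ of $E_\UU$ and let $s_u$ be its image under the projection $E_\UU \to E_\UU/E_\VV$. Since the fiber of $E_\UU/E_\VV$ at $gT$ is $\UU/\rho(g)(\VV)$, we have $s_u(gT) = 0$ iff $u \in \rho(g)(\VV)$, iff $\rho(g^{-1})(u) \in \VV$. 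Consequently $s_u$ admits a zero if and only if the $G$-orbit of $u$ meets $\VV$, so $\VV$ is universal if and only if $s_u$ has a zero for every $u \in \UU$.

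If $e(E_\UU/E_\VV) \neq 0$, then every smooth section of $E_\UU/E_\VV$, in particular $s_u$, must vanish somewhere, which gives universality. By Theorem \ref{T:class} this Euler class equals $\varphi(f_\VV)$, and by Theorem \ref{T:Borel} the kernel of $\varphi$ is precisely $\II$. Thus $f_\VV \notin \II$ already implies universality of $\VV$, which is the first assertion. For the second assertion, optimal dimension $\dim \VV = \dim \UU - 2m$ forces $d = m$, so $f_\VV \in \SSS_m$. By Theorem \ref{T:invariant}(5), or more symmetrically by Proposition \ref{P:inner}, the relation $f_\VV \in \II_m$ is equivalent to $\langle f_\VV, f_0 \rangle = 0$, i.e. to $C_\VV = 0$. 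Hence $C_\VV \neq 0$ is equivalent to $f_\VV \notin \II$ in the optimal case, and universality follows from the first part.

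The only genuinely geometric step is the equivalence between $s_u$ having a zero and the orbit $Gu$ meeting $\VV$; everything else is a direct invocation of Theorems \ref{T:Borel}, \ref{T:class} and Proposition \ref{P:inner}. That equivalence, once the identification $(E_\UU/E_\VV)_{gT} \cong \UU/\rho(g)(\VV)$ is spelled out, is essentially tautological, so I do not expect any real obstacle — the substantive work has already been absorbed into the preparatory results.
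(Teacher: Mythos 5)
Your proof is correct and follows essentially the same route as the paper: universality is reduced to the vanishing of the sections $s_u$ of $E_\UU/E_\VV$, the nonzero Euler class is used as the obstruction, and Theorems \ref{T:class}, \ref{T:Borel} and Proposition \ref{P:inner} are invoked exactly as in the original argument. No gaps.
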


\begin{proof}
Let $E_\UU,E_\VV$ be the vector bundles defined in Section 3. Every
vector $u\in\UU$, viewed as a constant section of $E_\UU$, induces a
smooth section $s_u$ of $E_\UU/E_\VV$. It is obvious that $gT$ is a
zero of $s_u$ if and only if $u\in\rho(g)(\VV)$, that is,
$\rho(g^{-1})(u)\in\VV$. So $\VV$ is universal if and only if for
any $u\in\UU$, $s_u$ has a zero. This will be ensured if the Euler
class $e(E_\UU/E_\VV)$ is nonzero.

By Theorem \ref{T:class}, $e(E_\UU/E_\VV)=\varphi(f_\VV)$, where
$\varphi:\SSS\rightarrow H^*(G/T,\RR)$ is the homomorphism
\eqref{E:homo}. By Theorem \ref{T:Borel}, the kernel of $\varphi$ is
$\II$. So $e(E_\UU/E_\VV)\neq0$ if and only if $f_\VV\notin\II$.
This proves the main assertion. The particular case follows from
Proposition \ref{P:inner}.
\end{proof}

In Theorem \ref{T:main}, when $\VV$ has optimal dimension, the
sufficient condition $C_\VV\neq0$ for the universality of $\VV$ is
fairly easy to check. If $\dim \VV>\dim \UU-2m$, then $\deg
f_\VV<m$. In this case, to determine whether $f_\VV\notin\II$, one
can use Theorem \ref{T:invariant} (5). Since $\II$ is generated by
the basic generators $F_1,\ldots,F_r$, one can also use the theory
of Gr\"{o}bner bases (see \cite{CLO}). But for many representations,
this case can be reduced to the case of optimal dimension as the
next proposition shows. (In a special case, this result was pointed
out to us by J.-K. Yu.)

\begin{proposition}\label{P:shrink}
Suppose that the identity component of $\ker(\rho)$ is a torus, and
suppose that there exists a $G$-orbit $\OO$ in $\UU$ such that $\dim
\OO=\dim G/\ker(\rho)$. If $\VV$ is a $T$-invariant subspace of
$\UU$ such that $\UU^T\subset\VV$ and $f_\VV\notin\II$, then there
exists a $T$-invariant subspace $\VV'$ of $\UU$ with optimal
dimension such that $\UU^T\subset\VV'\subset\VV$ and
$f_{\VV'}\notin\II$.
\end{proposition}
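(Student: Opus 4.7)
I would induct on $m - d$, where $d = \deg f_\VV$. Since $\SSS_k \subset \II$ for $k > m$ by Theorem~\ref{T:invariant}~(3), the hypothesis $f_\VV \notin \II$ forces $d \leq m$; the base case $d = m$ is immediate with $\VV' = \VV$.

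For the inductive step, decompose $\VV = \UU^T \oplus \bigoplus_{j=1}^k \VV_j$ into $2$-dimensional $T$-irreducible summands with associated characters $\chi'_j$. The whole reduction collapses to the claim that \emph{some} $j_0$ satisfies $f_\VV \cdot \chi'_{j_0} \notin \II$: once this is known, set $\VV'' := \UU^T \oplus \bigoplus_{j \neq j_0} \VV_j$, which is $T$-invariant with $\UU^T \subset \VV'' \subset \VV$ and $f_{\VV''} = \pm f_\VV \cdot \chi'_{j_0} \notin \II$ of degree $d+1$, and apply the induction hypothesis to $\VV''$.

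To produce such $j_0$, set $\mathfrak{k} := \mathrm{Lie}(\ker(\rho)^0)$ and $\mathfrak{k}^\perp := \{\alpha \in \Lt^* : \alpha(\mathfrak{k}) = 0\}$. The key input is the spanning identity $\spa\{\chi'_1, \ldots, \chi'_k\} \supset \mathfrak{k}^\perp$, which I would derive as follows. The group $\ker(\rho)^0$ is a normal connected torus in a connected compact group, hence central, hence contained in $T$, so $\mathfrak{k} \subset \Lt$. By Theorem~\ref{T:main}, $\VV$ is universal, so I may pick $v \in \OO \cap \VV$; the hypothesis on $\dim \OO$ together with $\ker(\rho) \subset G_v$ forces $(G_v)^0 = \ker(\rho)^0$, and hence $\dim(T \cdot v) = \dim T - \dim \mathfrak{k}$. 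A direct stabilizer computation in the $T$-isotypic decomposition $v = v^0 + \sum_\chi v_\chi$ shows this dimension equals that of $\spa\{\chi : v_\chi \neq 0\}$, which lies in $\mathfrak{k}^\perp$ (every character of $\UU$ kills $\ker(\rho)^0$). Since $\dim \mathfrak{k}^\perp = \dim T - \dim \mathfrak{k}$, the two spans coincide.

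Given the spanning, I argue by contradiction: if $f_\VV \cdot \chi'_j \in \II$ for every $j$, then $f_\VV \cdot \mathfrak{k}^\perp \subset \II$ by linearity. Separately, centrality of $\mathfrak{k}$ forces $W$ to fix $\mathfrak{k}$ pointwise, so via a $W$-invariant splitting of $\Lt^*$ the complement of $\mathfrak{k}^\perp$ sits inside $(\Lt^*)^W = \JJ_1 \subset \JJ_+$, which gives $f_\VV \cdot (\text{complement}) \subset \II$ automatically. Thus $f_\VV \cdot \Lt^* \subset \II$, i.e., $f_\VV \cdot \SSS_+ \subset \II$. Applying Theorem~\ref{T:invariant}~(5), $\partial_{H}(\partial_{f_\VV} f_0) = \partial_{f_\VV \cdot H} f_0 = 0$ for every $H \in \Lt^*$, forcing the nonzero $W$-harmonic polynomial $\partial_{f_\VV} f_0 \in \HHH_{m-d}$ to be constant, which is impossible since $m - d > 0$. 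The main obstacle is the spanning identity in paragraph three; once it is in hand, the reduction via Theorem~\ref{T:invariant}~(5) and differentiation is short.
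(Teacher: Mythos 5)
Your proposal is correct and follows the same overall skeleton as the paper's proof (decompose $\VV$ into $T$-irreducible summands, use a point of the big orbit $\OO$ to show the characters $\chi'_j$ span enough of $\Lt^*$, find one $\chi'_{j_0}$ with $f_\VV\chi'_{j_0}\notin\II$, and strip off summands one at a time), but the two key sub-arguments are carried out differently. First, the paper begins by replacing $G$ with $G/\ker(\rho)$ so that $\rho$ is faithful, after which the stabilizer argument gives $\spa\{\chi'_j\}=\Lt^*$ outright; you instead keep $\ker(\rho)$ in place, prove $\spa\{\chi'_j\}\supset\mathfrak{k}^\perp$, and then handle the missing directions by observing that a $W$-invariant complement of $\mathfrak{k}^\perp$ lies in $(\Lt^*)^W=\JJ_1\subset\JJ_+$, so those products land in $\II$ for free. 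This is slightly longer but avoids the (glossed-over) verification that the hypothesis $f_\VV\notin\II$ transfers correctly under the quotient. Second, and more substantively, where the paper rules out $f_\VV\Lt^*\subset\II$ by passing through $\varphi$ and invoking Poincar\'e duality on $H^*(G/T,\RR)$, you rule it out purely algebraically: $f_\VV H\in\II$ for all $H\in\Lt^*$ would give $\partial_H(\partial_{f_\VV}f_0)=0$ by Theorem \ref{T:invariant}(5), forcing the nonzero harmonic polynomial $\partial_{f_\VV}f_0\in\HHH_{m-d}$ to be a constant of positive degree. Both routes are valid; yours stays entirely within the invariant theory of $W$ (a cleaner dependency structure, since Theorem \ref{T:invariant}(5) is already available), while the paper's is shorter given that Borel's theorem and duality are in hand. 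All the supporting claims you lean on (that $\ker(\rho)^0$ is central hence in $T$, that $(G_v)^0=\ker(\rho)^0$ for $v\in\OO$, that $W$ fixes $\mathfrak{k}$ pointwise) check out.
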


\begin{proof}
By replacing $G$ with $G/\ker(\rho)$ if necessary, we may assume
that $\rho$ is faithful. If $\VV$ has optimal dimension, there is
nothing to prove. So we assume that $d=\deg f_\VV<m$. Let
$Z=\{\beta\in\Lt^*|f_\VV \beta\in\II\}$. Then $Z$ is a subspace of
$\Lt^*$. We claim that $Z\neq\Lt^*$. Indeed, if $Z=\Lt^*$, then
$f_\VV\Lt^*\subset\II$, and then $f_\VV\SSS_{m-d}\subset\II$. Hence
$\varphi(f_\VV)H^{2(m-d)}(G/T,\RR)=\varphi(f_\VV)\varphi(\SSS_{m-d})=0$.
But by the Poincar\'{e} duality, the pairing $H^{2d}(G/T,\RR)\times
H^{2(m-d)}(G/T,\RR)\rightarrow H^{2m}(G/T,\RR)\cong\RR$ defined by
$(\xi_1,\xi_2)\mapsto\xi_1\xi_2$ is nondegenerate. As
$\varphi(f_\VV)\neq0$, we have a contradiction.

We decompose $\VV$ as $\VV=\UU^T\oplus\bigoplus_{i=1}^{d'}\VV_i$,
where each $\VV_i$ is $2$-dimensional and $T$-invariant, and the
$T$-action on $\VV_i$ is equivalent to the real $T$-module
associated to some $\chi'_i\in\XX(T)$. Since $f_\VV\notin\II$, $\VV$
is universal. So $\OO\cap\VV\neq\emptyset$ and we can choose
$v\in\OO\cap\VV$. Then for any $H\in\bigcap_{i=1}^{d'}\ker(\chi'_i)$
and $t\in\RR$, we have $\rho(e^{tH})(v)=v$. But $\dim \OO=\dim G$
forces $H=0$. So we have $\bigcap_{i=1}^{d'}\ker(\chi'_i)=0$, that
is, $\spa\{\chi'_1,\ldots,\chi'_{d'}\}=\Lt^*$. Since $Z\neq\Lt^*$,
there exists $i_0$ such that $\chi'_{i_0}\notin Z$, that is,
$f_\VV\chi'_{i_0}\notin\II$. Then, for
$\VV_1=\UU^T\oplus\bigoplus_{i\neq i_0}\VV_i$, we have
$f_{\VV_1}=f_\VV\chi'_{i_0}\notin\II$. Now the proposition follows
by induction.
\end{proof}

\begin{remark}
By the principal orbit theorem (see e.g. \cite{Kaw}), the principal
orbits have the maximal dimension and their union is an open and
dense subset of $\UU$. Hence there exists a $G$-orbit $\OO$ with
$\dim \OO=\dim G/\ker(\rho)$ if and only if the dimension of the
principal orbits is $\dim G/\ker(\rho)$. Many representations
satisfy this condition. For instance, the condition is satisfied by
the complexified adjoint representation of $G$. Indeed, there are
only finitely many real irreducible representations of $G$ for which
the condition fails. Moreover, in the case when $G$ is simple, a
complete list of real irreducible representations for which the
condition fails is given in \cite{HH}. A similar list for complex
representations (not necessarily irreducible) of complex simple Lie
groups is given in \cite{El}.
\end{remark}

It is interesting to ask to what extent the converse of Theorem
\ref{T:main} holds. We now prove a special case. Further discussion
will be given in Section 6.

Suppose that $\VV$ has optimal dimension, and let $\WW$ be a
$T$-invariant subspace of $\UU$ complementary to $\VV$. Let $\Lg$
and $\Lm$ be as before. For every $v\in\VV$, we define a map
$\psi_v:\Lm\rightarrow\WW$ by
\begin{equation}\label{E:psi}
\psi_v(X)=-P_\WW d\rho(X)(v),
\end{equation}
where $P_\WW$ is the projection of $\UU$ on $\WW$ along $\VV$, and
$d\rho:\Lg\rightarrow\mathrm{End}(\UU)$ is the differential of
$\rho$. Note that since $\VV$ has optimal dimension, we have
$\dim\Lm=\dim\WW$. Let $\det(\psi_v)$ be the determinant of the
matrix of $\psi_v$ with respect to the basis \eqref{E:basis-m} of
$\Lm$ and the basis
\begin{equation}\label{E:basis-Wm}
\{w_1,J_\WW w_1,\ldots,w_m,J_\WW w_m\}
\end{equation}
of $\WW$ (see \eqref{E:basis-W}). The map $v\mapsto\det(\psi_v)$ is
a real polynomial function on $\VV$.

\begin{theorem}\label{T:inverse}
Let the notation be as above. Suppose that $\det(\psi_v)$ does not
take both positive and negative
values on $\VV$. Then the following statements are equivalent:\\
(1) $C_\VV\neq0$;\\
(2) $\VV$ is universal;\\
(3) $\det(\psi_v)$ is not identically zero.
\end{theorem}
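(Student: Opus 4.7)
The plan is to establish the cycle $(1) \Rightarrow (2) \Rightarrow (3) \Rightarrow (1)$; only the last arrow will require the no-sign-change hypothesis. The first implication is immediate from Theorem~\ref{T:main}.

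The core computation, used throughout, identifies $\psi_v$ with the differential of the section $s_v$ at $eT$. Explicitly, for $v \in \VV$, $u = \rho(g)v$, and $X \in \Lm$, unwinding the definitions of $s_u$ and $E_\WW$ gives
$$\rho(g^{-1})\, s_u(g e^X T) = P_\WW\bigl(\rho(e^{-X})v\bigr) + O(|X|^2) = \psi_v(X) + O(|X|^2),$$
so $(ds_u)_{gT}$ is identified with $\psi_v$ once $T_{gT}(G/T)$ is identified with $\Lm$ via $(d\tau_g)_0$ and $(E_\WW)_{gT} = \rho(g)\WW$ with $\WW$ via $\rho(g^{-1})$. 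A short orientation check---using that the basis \eqref{E:basis-m} orients $\Lm$ compatibly with $G/T$, and that the transfer of $J_\WW$ along $\rho(g)$ is $T$-invariant (so orients each fiber of $E_\WW$ compatibly)---shows that at any transversal zero $gT$ of $s_u$,
$$\ind_{gT}(s_u) = \sgn \det \psi_{\rho(g^{-1})u}.$$

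Next, introduce the $G$-equivariant map $F : G \times_T \VV \to \UU$, $F([g,v]) = \rho(g)v$, between manifolds of equal dimension (by the optimal-dimension hypothesis). Its image is $G \cdot \VV$, so $F$ is surjective iff $\VV$ is universal. From $dF_{[e,v]}(X, v') = v' + d\rho(X)v$ together with $G$-equivariance, $dF_{[g,v]}$ is invertible iff $\psi_v$ is invertible. For $(2) \Rightarrow (3)$: if $\VV$ is universal then $F$ is surjective, so by Sard's theorem there exists a regular value $u_0 \in \UU$; at any point $[g,v] \in F^{-1}(u_0)$ the map $\psi_v$ is invertible, proving $\det \psi_v \not\equiv 0$ on $\VV$.

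For $(3) \Rightarrow (1)$, the no-sign-change hypothesis lets us assume, after possibly reversing an orientation, that $\det \psi_v \geq 0$ on $\VV$ with $\det \psi_{v_0} > 0$ for some $v_0$. The implicit function theorem applied to $(u, gT) \mapsto s_u(gT)$ at $(v_0, eT)$---where the $gT$-derivative $\psi_{v_0}$ is invertible---yields a neighborhood $U$ of $v_0$ in $\UU$ such that every $s_u$ for $u \in U$ has a zero near $eT$ with intersection index $+1$. Use Sard's theorem to choose such $u \in U$ that is also a regular value of $F$; then every zero of $s_u$ has $\psi$ invertible, and by the no-sign-change hypothesis intersection index $+1$. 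Thus $C_\VV = e(E_\UU/E_\VV)([G/T]) = I(s_u, s_0) \geq 1 > 0$ by Theorem~\ref{T:number}. The main obstacle will be the orientation tracking in the intersection-index formula and combining Sard's theorem with the implicit function theorem cleanly in the final step.
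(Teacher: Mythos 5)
Your proposal is correct and follows essentially the same route as the paper: the same implication cycle, the same key identification $\ind_{gT}(s_u)=\sgn\det\psi_{\rho(g^{-1})u}$ at transversal zeros, the same use of the map $F$ and its differential to produce a nonempty transversal zero set, and the same final count $C_\VV=I(s_u,s_0)$ via Theorem~\ref{T:number}. The only cosmetic difference is that you obtain genericity from Sard's theorem applied to $F$ on $G\times_T\VV$, whereas the paper invokes the parametric transversality theorem for $(g,u)\mapsto\rho(g)(u)$; also note that $\rho(g^{-1})s_u(ge^XT)$ is exactly $e^{d\rho(X)}P_\WW e^{-d\rho(X)}(v)$ rather than $P_\WW(\rho(e^{-X})v)$, though the two agree to first order, which is all you use.
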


Before proving the theorem, we first make some preparations. Recall
from the proof of Theorem \ref{T:main} that we associated to each
$u\in\UU$ a section $s_u$ of the oriented bundle $E_\UU/E_\VV$ over
$G/T$, and that a point $gT\in G/T$ lies in the zero locus $Z(s_u)$
of $s_u$ if and only if $\rho(g^{-1})(u)\in\VV$. We denote the
$G$-orbit of $u$ by $\OO_u$.

\begin{lemma}
Let $u\in\UU$, $x=gT\in Z(s_u)$ and
$v=\rho(g^{-1})(u)$. Then\\
(1) $\ind_x(s_u)=\sgn(\psi_v)$;\\
(2) $\OO_u\pitchfork_v\VV\Leftrightarrow
s_u\pitchfork_xs_0\Leftrightarrow\det(\psi_v)\neq0$. In particular,
$\OO_u\pitchfork\VV$ if and only if $s_u\pitchfork s_0$.
\end{lemma}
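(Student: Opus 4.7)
The plan is to identify the quotient bundle $E_\UU/E_\VV$ with $E_\WW$ via the projection $P_\WW$, express the section $s_u$ explicitly in this trivialization, and differentiate it at $x$ to read off both the rank and the sign of $(ds_u)_x$ in terms of $\psi_v$. Under this identification, $s_u(hT)=P_{\rho(h)(\WW),\rho(h)(\VV)}(u)$, the projection of $u$ onto $\rho(h)(\WW)$ along $\rho(h)(\VV)$; at the zero $x=gT$ it indeed vanishes since $u=\rho(g)(v)$ with $v\in\VV$.

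I would parametrize a neighborhood of $x$ by $\tau_g(X)=ge^XT$ and apply $\rho(g^{-1})$ to move the computation to the identity. Setting $w_t=\rho(g^{-1})s_u(\tau_g(tX))=P_{\rho(e^{tX})(\WW),\rho(e^{tX})(\VV)}(v)$, I decompose $v=\rho(e^{tX})(v'_t+w'_t)$ with $v'_t\in\VV$, $w'_t\in\WW$, so that $v'_0=v$, $w'_0=0$, and $w_t=\rho(e^{tX})(w'_t)$. Differentiating $v'_t+w'_t=\rho(e^{-tX})(v)$ at $t=0$ and projecting onto $\WW$ along $\VV$ gives $\dot w'_0=-P_\WW\,d\rho(X)(v)=\psi_v(X)$, whence $\dot w_0=\psi_v(X)$. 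This produces the key identity
\[
(ds_u)_x\circ (d\tau_g)_0 \;=\; \rho(g)\circ\psi_v : \Lm \longrightarrow (E_\WW)_x=\rho(g)(\WW).
\]

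Statement (1) then reduces to orientation bookkeeping. By the orientation fixed on $G/T$ in Section~3, $(d\tau_g)_0$ is orientation-preserving; and because each character $\chi_i$ preserves the chosen orientation on $\WW_i$, the fiberwise orientation on $E_\WW$ is the pushforward by $\rho(h)$ of the one on $\WW$, so $\rho(g):\WW\to\rho(g)(\WW)$ is also orientation-preserving. Read in the bases \eqref{E:basis-m} and \eqref{E:basis-Wm} this yields $\ind_x(s_u)=\sgn((ds_u)_x)=\sgn(\psi_v)$. For the transversality part of (2), the same identity says $(ds_u)_x$ is an isomorphism iff $\psi_v$ is; since the two spaces have the same dimension, this is exactly $s_u\pitchfork_x s_0\Leftrightarrow\det(\psi_v)\neq 0$. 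For $\OO_u\pitchfork_v\VV\Leftrightarrow\det(\psi_v)\neq 0$, I would use $T_v\OO_u=d\rho(\Lg)(v)$ together with the fact that $\Lt$ preserves $\VV$, so $P_\WW\,d\rho(H)(v)=0$ for $H\in\Lt$; hence $P_\WW(T_v\OO_u)=\psi_v(\Lm)$, and transversality $T_v\OO_u+\VV=\UU$ becomes $\psi_v(\Lm)=\WW$, equivalent by dimension to $\det(\psi_v)\neq 0$. The final ``in particular'' follows by running this equivalence over all $v\in\OO_u\cap\VV$, which correspond bijectively to points of $Z(s_u)$ via $u=\rho(g)v$.

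I expect the main obstacle to be the clean differentiation of the $t$-dependent projection $P_{\rho(e^{tX})(\WW),\rho(e^{tX})(\VV)}$: the device of first pulling back by $\rho(g^{-1})$ to the identity and then expanding $v$ through the moving decomposition isolates the single term $-P_\WW\,d\rho(X)(v)$, avoiding any need to differentiate the projection operator itself. The orientation matching is routine but must be performed strictly with the conventions fixed in Section~3.
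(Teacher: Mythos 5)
Your proposal is correct and follows essentially the same route as the paper: identify $E_\UU/E_\VV$ with $E_\WW$ via $s_u(hT)=\rho(h)P_\WW\rho(h^{-1})(u)$, differentiate along $\tau_g$ to obtain the key identity $(ds_u)_x\circ(d\tau_g)_0=\rho(g)\circ\psi_v$, and then invoke the orientation conventions of Section~3; your computation of $\dot w_0$ via the moving decomposition of $v$ is just a repackaging of the paper's differentiation of $e^{d\rho(tX)}P_\WW e^{-d\rho(tX)}(v)$. Your spelled-out argument for $\OO_u\pitchfork_v\VV\Leftrightarrow\det(\psi_v)\neq0$ (which the paper calls obvious) is also fine, though the map $Z(s_u)\to\OO_u\cap\VV$ is only surjective, not bijective in general --- surjectivity is all the ``in particular'' clause needs.
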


\begin{proof}
(1). Since $E_\UU/E_\VV$ is equivalent to $E_\WW$, $s_u$ can be
viewed as a map $s_u:G/T\rightarrow\UU$ and can be expressed as
$$s_u(gT)=P_{\rho(g)(\WW)}(u)=\rho(g)P_\WW\rho(g^{-1})(u),$$ where
$P_{\rho(g)(\WW)}$ is the projection of $\UU$ onto $\rho(g)(\WW)$
along $\rho(g)(\VV)$. Since $x\in Z(s_u)$, we can view the tangent
map $(ds_u)_x$ as a map from $T_x(G/T)$ into $\rho(g)(\WW)$. Let
$\tau_g$ be the map defined in \eqref{E:tau}. Then we have
$$s_u\circ\tau_g(X)=\rho(g)\rho(e^X)P_\WW\rho(e^{-X})\rho(g^{-1})(u)
=\rho(g)e^{d\rho(X)}P_\WW e^{-d\rho(X)}(v).$$ Since $v\in\VV$,
\begin{align*}
&(ds_u)_x\circ (d\tau_g)_0(X)=\rho(g)(d\rho(X)P_\WW(v)-P_\WW
d\rho(X)(v))\\
=& -\rho(g)P_\WW d\rho(X)(v)=\rho(g)\psi_v(X),
\end{align*}
that is, $(ds_u)_x\circ (d\tau_g)_0=\rho(g)\circ\psi_v$. Since both
$(d\tau_g)_0$ and $\rho(g):\WW\rightarrow\rho(g)(\WW)$ are
invertible and preserve the orientation, we have
$\ind_x(s_u)=\sgn((ds_u)_x)=\sgn(\psi_v)$.

(2). The assertion
$s_u\pitchfork_xs_0\Leftrightarrow\det(\psi_v)\neq0$ follows
directly from (1), and the assertion
$\OO_u\pitchfork_v\VV\Leftrightarrow\det(\psi_v)\neq0$ is obvious
from the definition of $\psi_v$.
\end{proof}

We say that a flag $gT\in G/T$ \emph{sends $u$ into $\VV$} if $gT\in
Z(s_u)$. By the above lemma, if $\OO_u\pitchfork\VV$, then the
number
$$N(u,\VV)=\#Z(s_u)$$ of flags sending $u$ into $\VV$ is finite. By
the transversality theorem (see \cite{GP}), the vectors $u\in\UU$
for which $\OO_u\pitchfork\VV$ form an open and dense subset of
$\UU$. So $N(u,\VV)$ is finite for generic $u\in\UU$. In general,
$N(u,\VV)$ is not constant even for generic $u$. But this is the
case if the condition of Theorem \ref{T:inverse} holds.

\begin{lemma}\label{L:number}
Under the condition of Theorem \ref{T:inverse}, for $u\in\UU$ with
$\OO_u\pitchfork\VV$, we have $N(u,\VV)=|C_\VV|$.
\end{lemma}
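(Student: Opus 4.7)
The plan is to assemble three facts already established in the excerpt and observe that under the sign-constancy hypothesis the signed count becomes an unsigned count. The section $s_u$ of $E_\UU/E_\VV \cong E_\WW$ is a section of an oriented rank-$2m$ bundle over the oriented $2m$-dimensional manifold $G/T$, so the standard formula from Section~2.1 applies: $I(s_u,s_0) = e(E_\UU/E_\VV)([G/T])$. By Theorem~\ref{T:number} the right-hand side equals $C_\VV$, so we already have $I(s_u,s_0) = C_\VV$ unconditionally.

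Next I would use the hypothesis $\OO_u \pitchfork \VV$. By part (2) of the preceding lemma, this is equivalent to $s_u \pitchfork s_0$, so $Z(s_u)$ is finite and the general formula from Section~2.1 gives
\begin{equation*}
I(s_u,s_0) = \sum_{x \in Z(s_u)} \ind_x(s_u).
\end{equation*}
For each $x = gT \in Z(s_u)$, set $v = \rho(g^{-1})(u) \in \OO_u \cap \VV$; by transversality $\det(\psi_v) \neq 0$, and by part (1) of the preceding lemma $\ind_x(s_u) = \sgn(\psi_v)$.

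Finally I would invoke the assumption that $v \mapsto \det(\psi_v)$ does not change sign on $\VV$. The $v$'s arising above all lie in $\VV$ and all have $\det(\psi_v) \neq 0$, so the signs $\sgn(\psi_v)$ all agree; call the common value $\epsilon \in \{+1,-1\}$. Then
\begin{equation*}
C_\VV \;=\; I(s_u,s_0) \;=\; \sum_{x \in Z(s_u)} \epsilon \;=\; \epsilon \cdot \#Z(s_u) \;=\; \epsilon \cdot N(u,\VV),
\end{equation*}
and taking absolute values yields $N(u,\VV) = |C_\VV|$, as required.

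No real obstacle remains — the heavy lifting was done in Theorem~\ref{T:number} (identifying $C_\VV$ with the Euler-class evaluation) and in the preceding lemma (the local identification $\ind_x(s_u) = \sgn(\psi_v)$). The only mildly delicate point is checking that the sign-constancy hypothesis, which is a condition on all of $\VV$, is being applied to the correct subset, namely $\{v \in \OO_u \cap \VV : \det(\psi_v) \neq 0\}$; since this is a subset of $\VV$ on which $\det(\psi_v) \neq 0$, the hypothesis forces the common sign $\epsilon$ to exist.
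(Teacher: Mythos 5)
Your proof is correct and follows essentially the same route as the paper's: it combines Theorem \ref{T:number} with parts (1) and (2) of the preceding lemma and uses the sign-constancy hypothesis to turn the signed intersection count into an unsigned point count (the paper simply normalizes to $\det(\psi_v)\geq 0$ instead of carrying the sign $\epsilon$). No gaps.
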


\begin{proof}
Without loss of generality, we may assume that $\det(\psi_v)\geq0$
for every $v\in\VV$. Since $\OO_u\pitchfork\VV$, for every $x=gT\in
Z(s_u)$ and $v=\rho(g^{-1})(u)\in\VV$, we have $\det(\psi_v)\neq0$,
which implies that $\det(\psi_v)>0$. So for such $x$ and $v$,
$\ind_x(s_u)=\sgn(\psi_v)=1$. By Theorem \ref{T:number}, we have
\begin{align*}
&N(u,\VV)=\#Z(s_u)=\sum_{x\in
Z(s_u)}\ind_x(s_u)\\
=&I(s_u,s_0)=e(E_\UU/E_\VV)([G/T])=C_\VV.
\end{align*}
\end{proof}

\begin{proof}[Proof of Theorem \ref{T:inverse}]
``(1) $\Rightarrow$ (2)". This follows directly from Theorem
\ref{T:main}.

``(2) $\Rightarrow$ (3)". Since the map $G\times\UU\rightarrow\UU$,
$(g,u)\mapsto\rho(g)(u)$ is transversal to $\VV$, by the
transversality theorem, there exists $u\in\UU$ such that
$\OO_u\pitchfork\VV$. Since $\VV$ is universal,
$\OO_u\cap\VV\neq\emptyset$. So we may choose $v\in \OO_u\cap\VV$
and we have $\OO_u\pitchfork_v\VV$. So $\det(\psi_v)\neq0$.

``(3) $\Rightarrow$ (1)". Choose $v_0\in\VV$ such that
$\det(\psi_{v_0})\neq0$. Then $\OO_{v_0}\pitchfork_{v_0}\VV$.
Consider the map $F:G\times\VV\rightarrow\UU$, $F(g,v)=\rho(g)(v)$.
Then $(dF)_{(e,v_0)}$ is surjective. Hence the image $\im(F)$ of $F$
contains an open neighborhood of $v_0$ in $\UU$. By the
transversality theorem, we can choose $u\in\im(F)$ such that
$\OO_u\pitchfork\VV$. Since $u\in\im(F)$,
$\OO_u\cap\VV\neq\emptyset$. So $Z(s_u)$ is not empty.  By Lemma
\ref{L:number}, $|C_\VV|=N(u,\VV)>0$.
\end{proof}

Now we give an example for which the assumption of Theorem
\ref{T:inverse} holds. Let $\Lm_\CC$ be the complexification of
$\Lm$, and let $\Ln^\pm=\bigoplus_{\alpha\in\Phi^\pm}\Lg_\alpha$,
where $\Phi^-=-\Phi^+$. Then $\Lm_\CC=\Ln^+\oplus\Ln^-$. Recall that
$\WW$ can be equipped with the complex structure $J_\WW$ as in
\eqref{E:JW}. Then the map $\psi_v:\Lm\rightarrow\WW$ extends
uniquely to a complex linear map
$(\psi_v)_\CC:\Lm_\CC\rightarrow(\WW,J_\WW)$ for all $v\in\VV$.

\begin{remark}\label{R:2}
As in Remark \ref{R:1}, if $\UU$ admits a complex structure such
that the $G$-action is complex linear, and if $\VV$, $\WW$ are
complex subspaces of $\UU$, then $J_\WW$ can be chosen to be the
same as the original complex structure on $\WW$. In this case, the
homomorphism $d\rho:\Lg\rightarrow\mathrm{End}_\CC(\UU)$ extends
uniquely to a complex homomorphism
$(d\rho)_\CC:\Lg_\CC\rightarrow\mathrm{End}_\CC(\UU)$, and we have
$$
(\psi_v)_\CC(X)=-P_\WW (d\rho)_\CC(X)(v)
$$
for $v\in\VV$ and $X\in\Lm_\CC$.
\end{remark}

\begin{corollary}\label{C:inverse}
Let the notation be as above. Suppose that, with respect to the
complex structure $J_\WW$, there is a complex decomposition
$\WW=\WW^+\bigoplus\WW^-$ such that
$(\psi_v)_\CC(\Ln^\pm)\subset\WW^\pm$ for all $v\in\VV$. Then $\VV$
is universal if and only if $C_\VV\neq0$. Furthermore, if $u\in\UU$
and $\OO_u\pitchfork\VV$, then $N(u,\VV)=|C_\VV|$.
\end{corollary}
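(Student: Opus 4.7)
The plan is to verify the hypothesis of Theorem \ref{T:inverse}, namely that $\det(\psi_v)$ does not take both positive and negative values on $\VV$. Once this constant-sign property is established, Theorem \ref{T:inverse} gives the equivalence between $\VV$ being universal and $C_\VV\neq0$, and Lemma \ref{L:number} yields the count $N(u,\VV)=|C_\VV|$ whenever $\OO_u\pitchfork\VV$.

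To establish the constant sign, I first observe that $\Ln^+=\Lm^{1,0}$ is the $+i$-eigenspace of $(J_\Lm)_\CC$, so the projection $\sigma^+\colon\Lm\to\Ln^+$, $X\mapsto X^+$ (where $X=X^++X^-$ with $X^\pm\in\Ln^\pm$), is an $\RR$-linear isomorphism intertwining $J_\Lm$ with multiplication by $i$; in particular its real determinant, computed in bases compatible with the complex structures, equals $|{\det}_\CC\sigma^+|^2>0$. Since $X^-=\overline{X^+}$ for $X\in\Lm$, one rewrites $\psi_v=A_v\circ\sigma^+$, where $A_v\colon\Ln^+\to\WW$ is defined by $A_v(Y)=(\psi_v)_\CC(Y)+(\psi_v)_\CC(\overline{Y})$. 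The hypothesis $(\psi_v)_\CC(\Ln^\pm)\subset\WW^\pm$ says exactly that the first summand is $\CC$-linear into $\WW^+$ and the second is $\CC$-antilinear into $\WW^-$, so the decomposition of $A_v$ into $\CC$-linear and $\CC$-antilinear parts (with respect to $i$ on $\Ln^+$ and $J_\WW$ on $\WW$) is aligned with the splitting $\WW=\WW^+\oplus\WW^-$.

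The key calculation is then the standard block formula
\[
{\det}_\RR A_v={\det}_\CC\begin{pmatrix} M_1 & \overline{M_2} \\ M_2 & \overline{M_1} \end{pmatrix},
\]
where $M_1,M_2$ are the $m\times m$ complex matrices with $A_v(Y)=M_1Y+M_2\overline{Y}$ in chosen complex bases. By the hypothesis $M_1$ has its bottom $m''$ rows zero and $M_2$ has its top $m'$ rows zero, where $m'=\dim_\CC\WW^+$ and $m''=\dim_\CC\WW^-$. Permuting rows of the resulting $2m\times 2m$ matrix so as to stack the $\WW^+$-block with the $\overline{\WW^-}$-block places it in block-diagonal form whose two diagonal blocks are a single $m\times m$ complex matrix $C_v$ and its conjugate $\overline{C_v}$. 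This yields
\[
\det(\psi_v)=\varepsilon\,|{\det}_\CC C_v|^2,\qquad \varepsilon=(-1)^{m''},
\]
up to the positive factor $|{\det}_\CC\sigma^+|^2$. Since $\varepsilon$ does not depend on $v\in\VV$, the constant-sign property holds and the corollary follows from Theorem \ref{T:inverse} and Lemma \ref{L:number}. The only delicate point in the argument is bookkeeping the permutation signs correctly; everything else is a direct unpacking of definitions.
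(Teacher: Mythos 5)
Your reduction is the same as the paper's: everything hinges on verifying the hypothesis of Theorem \ref{T:inverse} that $\det(\psi_v)$ has constant sign on $\VV$, after which Theorem \ref{T:inverse} and Lemma \ref{L:number} give both assertions. Where you differ is in how that constant sign is obtained. The paper introduces a second complex structure $J'_\WW$ on $\WW$, equal to $J_\WW$ on $\WW^+$ and to $-J_\WW$ on $\WW^-$, and checks directly on the basis \eqref{E:basis-m} that $\psi_v$ is complex linear from $(\Lm,J_\Lm)$ to $(\WW,J'_\WW)$; a complex-linear map has nonnegative real determinant in compatible oriented bases, and switching back from the $J'_\WW$-orientation to the $J_\WW$-orientation of \eqref{E:basis-Wm} multiplies by a sign depending only on $\dim_\CC\WW^-$, not on $v$. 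You instead factor $\psi_v$ through the $\CC$-linear isomorphism $\sigma^+:\Lm\to\Ln^+$ and compute the real determinant of $Y\mapsto M_1Y+M_2\overline{Y}$ via the standard $2\times 2$ block formula, using the row-support conditions $(\psi_v)_\CC(\Ln^\pm)\subset\WW^\pm$ to reduce, after a fixed row permutation, to $\pm\lvert{\det}_\CC C_v\rvert^2$. The two arguments are essentially two presentations of the same fact --- your splitting of $A_v$ into a $\CC$-linear part into $\WW^+$ and a $\CC$-antilinear part into $\WW^-$ is exactly the $J'_\WW$-linearity of $\psi_v$ --- but the paper's version buys freedom from the permutation-sign bookkeeping you flag as the delicate point, while yours makes the value $\pm\lvert{\det}_\CC C_v\rvert^2$ explicit. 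Your argument is correct; the only implicit step worth noting is that the complex basis of $\WW$ adapted to $\WW^+\oplus\WW^-$ used in your matrix computation may differ from \eqref{E:basis-Wm}, but both are $J_\WW$-compatible and hence define the same orientation, so the sign of $\det(\psi_v)$ is unaffected.
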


\begin{proof}
By Theorem \ref{T:inverse} and Lemma \ref{L:number}, it suffices to
show that $\det(\psi_v)$ does not take both positive and negative
values on $\VV$.

Let $J'_\WW$ be the complex structure on $\WW$ defined by
$$\begin{cases}J'_\WW(w)=J_\WW(w), & w\in\WW^+;\\J'_\WW(w)=-J_\WW(w), &
w\in\WW^-.\end{cases}$$ Then for $\alpha\in\Phi^+$,
\begin{align*}
\psi_vJ_\Lm(X_\alpha-X_{-\alpha})=&\psi_v(\sqrt{-1}(X_\alpha+X_{-\alpha}))\\
=&(\psi_v)_\CC(\sqrt{-1}X_\alpha)+(\psi_v)_\CC(\sqrt{-1}X_{-\alpha})\\
=&J_\WW(\psi_v)_\CC(X_\alpha)+J_\WW(\psi_v)_\CC(X_{-\alpha})\\
=&J'_\WW(\psi_v)_\CC(X_\alpha)-J'_\WW(\psi_v)_\CC(X_{-\alpha})\\
=&J'_\WW\psi_v(X_\alpha-X_{-\alpha}),
\end{align*}
\begin{align*}
\psi_vJ_\Lm(\sqrt{-1}(X_\alpha+X_{-\alpha}))=&-\psi_v(X_\alpha-X_{-\alpha})\\
=&J'_\WW J'_\WW\psi_v(X_\alpha-X_{-\alpha})\\
=&J'_\WW\psi_vJ_\Lm(X_\alpha-X_{-\alpha})\\
=&J'_\WW\psi_v(\sqrt{-1}(X_\alpha+X_{-\alpha})).
\end{align*}
So the map $\psi_v:\Lm\rightarrow\WW$ is complex linear with respect
to the complex structures $J_\Lm$ and $J'_\WW$. Hence with respect
to the basis \eqref{E:basis-m} of $\Lm$ and the basis $\{w'_1,J'_\WW
w'_1,\ldots,w'_m,J'_\WW w'_m\}$ of $\WW$, the matrix of $\psi_v$ has
nonnegative determinant, where $\{w'_1,\ldots,w'_m\}$ is any complex
basis of $(\WW,J'_\WW)$. So with respect to the basis
\eqref{E:basis-m} of $\Lm$ and the basis \eqref{E:basis-Wm} of
$\WW$, either $\det(\psi_v)\geq0$ for all $v\in\VV$, or
$\det(\psi_v)\leq0$ for all $v\in\VV$.
\end{proof}

\section{Generalizations of Schur's triangularization theorem}\label{Schur}

The classical Schur triangularization theorem asserts that for the
complexified adjoint representation of $U(n)$ in $\MM(n,\CC)$, the
subspace of $\MM(n,\CC)$ consisting of all upper (or lower)
triangular matrices is universal. We generalize this theorem to a
whole class of subspaces of $\MM(n,\CC)$ by using the results in the
previous sections. We shall also obtain similar generalizations for
the conjugation representations of $Sp(n)$ and $SO(n)$ in
$\MM(n,\HH)$ and $\MM(n,\RR)$, respectively. To unify the results,
we introduce the following definitions.

\begin{definition}
Let $n$ be a positive integer. \\
(1) An \emph{$n\times n$ zero pattern} is a set
$$I\subset\{(i,j)|i,j\in\{1,\ldots,n\}, i\neq j\}$$ of cardinality
$n(n-1)/2$. \\
(2) The \emph{characteristic polynomial} $\lambda_I$ of an $n\times
n$ zero pattern $I$ is
$$\lambda_I(x)=\prod_{(i,j)\in I}(x_i-x_j)\in\RR[x_1,\ldots,x_n].$$\\
(3) The \emph{characteristic number} of $I$ is
$C_I=\langle\lambda_I,\lambda_0\rangle$, where $\lambda_0$ is the
polynomial \eqref{E:lambda0}, and $\langle\cdot,\cdot\rangle$ is the
inner product on $\RR[x_1,\ldots,x_n]$ for which the set of
monomials $\{x_1^{m_1}\cdots x_n^{m_n}\}$ is an orthonormal basis.
\end{definition}

It is obvious that $\lambda_0$ is the characteristic polynomial of
the \emph{upper triangular} zero pattern
$$I_0=\{(i,j)|1\leq i<j\leq n\},$$ and that
$C_{I_0}=\langle\lambda_0,\lambda_0\rangle=n!$.

We first consider the complex case. For $A\in\MM(n,\CC)$, let
$A_{ij}$ denote the $(i,j)$-th entry of $A$.

\begin{theorem}\label{T:A}
Consider the complexified adjoint representation of $U(n)$ in
$\MM(n,\CC)$. Let $I$ be an $n\times n$ zero pattern. If $C_I\neq0$,
then the subspace
$$\MM_I(n,\CC)=\{A\in\MM(n,\CC)|A_{ij}=0 \
\text{for} \ (i,j)\in I\}$$ is universal.
\end{theorem}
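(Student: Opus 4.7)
The plan is to reduce Theorem \ref{T:A} to a direct application of Theorem \ref{T:main}, using the explicit data for $G=U(n)$ computed in Example \ref{Ex:A}. The strategy has three steps: check that $\VV = \MM_I(n,\CC)$ is a $T$-invariant subspace of optimal dimension containing $\UU^T$; identify the characteristic polynomial $f_\VV$ with the polynomial $\lambda_I$; and match the characteristic number $C_\VV$ with $C_I$.

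First, with $T=U(1)^n$ the standard diagonal maximal torus of $U(n)$, conjugation acts on the matrix unit $E_{ij}$ by the character $t\mapsto t_it_j^{-1}$, i.e., by $x_i-x_j$ in $\Lt^*$. Hence $\UU^T$ consists of diagonal matrices, and since the index set $I$ is disjoint from the diagonal, $\UU^T\subset\VV$. A natural $T$-invariant complement is $\WW=\spa_\CC\{E_{ij}:(i,j)\in I\}$, on which the complex lines $\CC E_{ij}$ ($(i,j)\in I$) are the irreducible $T$-summands. The dimension check is immediate: $|I|=n(n-1)/2 = m = \frac12\dim G/T$, so $\dim_\RR\VV=\dim_\RR\UU-2m$ and $\VV$ has optimal dimension.

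Next I would compute the characteristic polynomial. Equipping each $\CC E_{ij}$ with the orientation coming from its natural complex structure (as permitted by Remark \ref{R:1}, since $\UU=\MM(n,\CC)$ is complex and $\VV,\WW$ are complex subspaces), the character attached to $\CC E_{ij}$ is $x_i-x_j\in\XX(T)\subset\Lt^*$. Therefore
\[
f_\VV = \prod_{(i,j)\in I}(x_i-x_j) = \lambda_I(x).
\]
By Example \ref{Ex:A} the fundamental harmonic polynomial is $f_0=\lambda_0$ and the $W$-invariant inner product on $\SSS$ is the one for which monomials are orthonormal; with $|W|=n!$ and $\langle\lambda_0,\lambda_0\rangle=n!$, the definition of $C_\VV$ gives
\[
C_\VV = \frac{\langle \lambda_I,\lambda_0\rangle}{\langle\lambda_0,\lambda_0\rangle}\,|W| = \frac{C_I}{n!}\cdot n! = C_I.
\]

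Finally, the hypothesis $C_I\neq 0$ is exactly $C_\VV\neq 0$, so the optimal-dimension case of Theorem \ref{T:main} applies and yields the universality of $\VV=\MM_I(n,\CC)$. The only mildly delicate point is the bookkeeping with orientations: one must ensure that equipping every summand $\CC E_{ij}$ with its intrinsic complex orientation produces a well-defined $f_\VV$ whose sign is consistent with the definition of $C_\VV$. This is precisely the content of Remark \ref{R:1}, so no new difficulties arise beyond assembling the pieces above.
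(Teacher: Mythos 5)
Your proposal is correct and follows essentially the same route as the paper's own proof: the same complement $\WW=\bigoplus_{(i,j)\in I}\CC E_{ij}$, the same identification $f_\VV=\lambda_I$ via Remark \ref{R:1}, the same computation $C_\VV=C_I$ using $\langle\lambda_0,\lambda_0\rangle=n!=|W|$, and the same appeal to the optimal-dimension case of Theorem \ref{T:main}. No gaps.
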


\begin{proof}
We keep the notation from Example \ref{Ex:A}. For $(i,j)\in I$, let
$$\WW_{ij}=\{A\in\MM(n,\CC)|A_{kl}=0 \ \text{if} \ (k,l)\neq(i,j)\}.$$
Then $\WW_{ij}$ is $T$-invariant and $T$ acts on $\WW_{ij}$ via the
character $x_i-x_j$ (identified with $t\mapsto t_it_j^{-1}$). The
subspace $\WW=\bigoplus_{(i,j)\in I}\WW_{ij}$ of $\MM(n,\CC)$ is
complementary to $\VV=\MM_I(n,\CC)$. Note that $\VV$ has optimal
dimension. By Remark \ref{R:1}, we have
$$f_{\VV}(x)=\prod_{(i,j)\in I}(x_i-x_j)=\lambda_I(x),$$ and
$$C_{\VV}=\frac{\langle f_{\VV},f_0\rangle}{\langle
f_0,f_0\rangle}|S_n|=\frac{\langle
\lambda_I,\lambda_0\rangle}{\langle
\lambda_0,\lambda_0\rangle}n!=C_I.$$ So by Theorem \ref{T:main},
$\VV$ is universal.
\end{proof}

Note that $\MM_{I_0}(n,\CC)$ consists of all lower triangular
matrices. So if $I=I_0$, the above theorem reduces to the classical
Schur triangularization theorem.

We say that a zero pattern $I$ is \emph{bitriangular} if
$(i_0,j_0)\in I$ and $(j_0-i_0)(i_0-j_0+j-i)>0$ imply that $(i,j)\in
I$. If $i_0<j_0$ (resp. $i_0>j_0$), this condition says that
$(i,j)\in I$ whenever $j-i>j_0-i_0$ (resp. $i-j>i_0-j_0$). Using
Corollary \ref{C:inverse}, we can prove the following theorem.

\begin{theorem}\label{T:A-inverse}
Let $I$ be a bitriangular zero pattern. Then $\MM_I(n,\CC)$ is
universal if and only if $C_I\neq0$. Furthermore, if the
$U(n)$-orbit of $A\in\MM(n,\CC)$ is transversal to $\MM_I(n,\CC)$,
then $N(A,\MM_I(n,\CC))=|C_I|$.
\end{theorem}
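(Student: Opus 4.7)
The plan is to apply Corollary \ref{C:inverse} to $\VV=\MM_I(n,\CC)$ with a complex decomposition of the complementary space $\WW$ dictated by the bitriangular structure of $I$. Since the complexified adjoint representation is complex linear and $\VV$ is a complex subspace, Remark \ref{R:2} allows me to take the canonical $\WW=\bigoplus_{(i,j)\in I}\CC E_{ij}$, where $E_{ij}$ denotes the standard matrix units, with $J_\WW$ the standard complex structure. From the proof of Theorem \ref{T:A}, $\VV$ has optimal dimension and $C_\VV=C_I$, so once the hypothesis of Corollary \ref{C:inverse} is verified, both the universality criterion ($\VV$ universal $\Leftrightarrow C_I\neq 0$) and the count $N(A,\VV)=|C_I|$ under transversality will follow immediately.

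The natural complex decomposition to use is
$$\WW^+=\bigoplus_{\substack{(i,j)\in I\\ i<j}}\CC E_{ij},\qquad \WW^-=\bigoplus_{\substack{(i,j)\in I\\ i>j}}\CC E_{ij}.$$
Under the conventions of Example \ref{Ex:A}, $\Phi^+=\{x_i-x_j:i<j\}$ and $\Lg_{x_i-x_j}=\CC E_{ij}$, so $\Ln^+$ and $\Ln^-$ are the strictly upper and strictly lower triangular complex matrices respectively. By Remark \ref{R:2}, $(\psi_v)_\CC(X)=-P_\WW[X,v]$ for $v\in\VV$ and $X\in\Lm_\CC$, so it suffices to show that for every $v\in\VV$ and every matrix unit $E_{kl}$ with $k<l$, the projection $P_\WW[E_{kl},v]$ lies in $\WW^+$, and symmetrically when $k>l$.

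A direct expansion gives $[E_{kl},v]_{ij}=\delta_{ik}v_{lj}-\delta_{jl}v_{ik}$, so $P_\WW[E_{kl},v]$ can be supported only at positions $(k,j)\in I$ with $(l,j)\notin I$ (to have $v_{lj}\neq 0$) or at positions $(i,l)\in I$ with $(i,k)\notin I$. For $k<l$, I will show that any such $(k,j)$ satisfies $k<j$ and any such $(i,l)$ satisfies $i<l$; this is exactly where bitriangularity is used. For instance, if $(k,j)\in I$ were to satisfy $k>j$, then bitriangularity applied to $(i_0,j_0)=(k,j)$ would force every $(i',j')$ with $i'-j'>k-j$ into $I$; since $l>k$ implies $l-j>k-j$, this gives $(l,j)\in I$, contradicting $(l,j)\notin I$. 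The other three subcases (the $(i,l)$ position for $k<l$, and both positions for $k>l$) are entirely parallel: in each one the ``excluded'' pair $(l,j)$ or $(i,k)$ is dragged into $I$ by bitriangularity applied to the ``included'' pair.

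The main obstacle is not conceptual but combinatorial bookkeeping: keeping the four subcases straight, and ensuring the sign of $j_0-i_0$ in the bitriangularity condition is handled correctly so that one really does obtain a contradiction in each case. Once this verification is completed, the hypothesis of Corollary \ref{C:inverse} is satisfied, and together with the already established identity $C_\VV=C_I$, the corollary delivers both halves of Theorem \ref{T:A-inverse}.
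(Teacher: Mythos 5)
Your proposal is correct and follows the paper's own route exactly: the paper also applies Corollary \ref{C:inverse} with $\WW=\bigoplus_{(i,j)\in I}\CC E_{ij}$ and $\WW^\pm=\WW\cap\Ln^\pm$, invokes Remark \ref{R:2} for $(\psi_v)_\CC$, and reuses $C_\VV=C_I$ from the proof of Theorem \ref{T:A}. The only difference is that you spell out the bracket computation and the four bitriangularity subcases that the paper dismisses as ``easy to see,'' and your verification of them is accurate.
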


\begin{proof}
In the notation of Corollary \ref{C:inverse}, $\Ln^+$ (resp.
$\Ln^-$) is the set of all strictly upper (resp. lower) triangular
matrices in $\MM(n,\CC)$. Let $\VV=\MM_I(n,\CC)$, and let $\WW$ be
the subspace defined in the proof of Theorem \ref{T:A} and
$\WW^\pm=\WW\cap\Ln^\pm$. By Remark \ref{R:2}, for $v\in\VV$, we
have $(\psi_v)_\CC(X)=P_\WW([v,X])$, $X\in\Lm_\CC$. Since $I$ is
bitriangular, it is easy to see that
$P_\WW([\VV,\Ln^\pm])\subset\WW^\pm$. So
$(\psi_v)_\CC(\Ln^\pm)\subset\WW^\pm$ for all $v\in\VV$, and the
theorem follows from Corollary \ref{C:inverse}.
\end{proof}

\begin{remark}
For the complexified adjoint representation of $U(n)$ in
$\MM(n,\CC)$, the number $N(A,\MM_I(n,\CC))$ of flags sending $A$
into $\MM_I(n,\CC)$ can be interpreted as follows. By viewing $A$ as
an operator on $\CC^n$ and a flag as an ordered $n$-tuple
$(l_1,\ldots,l_n)$ of mutually orthogonal complex lines in $\CC^n$,
$N(A,\MM_I(n,\CC))$ is the number of flags $(l_1,\ldots,l_n)$ such
that with respect to an ordered basis $\{e_1,\ldots,e_n\}$ of
$\CC^n$ with $e_i\in l_i$, the matrix of $A$ is in $\MM_I(n,\CC)$.
\end{remark}

Now we consider the quaternionic case. For a matrix
$A\in\MM(n,\HH)$, let $A_{ij}$ denote the $(i,j)$-th entry of $A$.

\begin{theorem}\label{T:C}
Consider the conjugation representation of $Sp(n)$ in $\MM(n,\HH)$.
Let $I$ be an $n\times n$ zero pattern. If $C_I\neq0$, then the
subspace
$$\MM_I(n,\HH)=\{A\in\MM(n,\HH)|A_{ij}=0 \ \text{for} \ (i,j)\in I\}$$
is universal.
\end{theorem}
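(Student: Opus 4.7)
The plan is to mimic the proof of Theorem~\ref{T:A} (the complex case): compute the characteristic polynomial $f_\VV$ of $\VV=\MM_I(n,\HH)$ and then reduce the criterion $f_\VV\notin\II$ for the type-$C$ ideal to the already established criterion $C_I\neq 0$ from type~$A$.

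First I would decompose $\UU=\MM(n,\HH)$ under the conjugation action of the standard maximal torus $T=U(1)^n\subset Sp(n)$ from Example~\ref{Ex:C}. Writing $\HH=\CC\oplus\CC j$ and using the identity $jz=\bar z\,j$ for $z\in\CC$, a short calculation shows that if $A_{ij}=a+bj$ then $(tAt^{-1})_{ij}=t_i\bar t_j\,a+t_it_j\,bj$. Consequently $\UU^T$ is the $2n$-dimensional subspace spanned by the complex parts of the diagonal entries, and $\UU^T\subset\VV$ automatically (no pair $(i,i)$ lies in $I$). Taking the $T$-complement $\WW=\bigoplus_{(i,j)\in I}\HH\cdot E_{ij}$, each summand splits as a direct sum of two $2$-dimensional $T$-irreducible pieces carrying the characters $x_i-x_j$ and $x_i+x_j$, respectively. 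After fixing compatible orientations on the $\WW_i$'s this yields
\[
f_\VV(x)=\prod_{(i,j)\in I}(x_i-x_j)(x_i+x_j)=\lambda_I(x_1^2,\ldots,x_n^2).
\]

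Next I would prove that $f_\VV\notin\II$ is equivalent to $C_I\neq 0$. By Example~\ref{Ex:C}, $\II\subset\SSS=\RR[x_1,\ldots,x_n]$ is generated by the elementary symmetric polynomials in $x_1^2,\ldots,x_n^2$, which all lie in the subalgebra $A:=\RR[x_1^2,\ldots,x_n^2]$. Since $\SSS$ is a free $A$-module with basis $\{x^\epsilon:\epsilon\in\{0,1\}^n\}$, a short calculation using this decomposition shows that $\II\cap A$ is precisely the extension to $A$, via the isomorphism $y_i\mapsto x_i^2$, of the type-$A$ invariant ideal $\II_A\subset\RR[y_1,\ldots,y_n]$. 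Because $f_\VV\in A$, this gives $f_\VV\in\II\iff\lambda_I\in\II_A$. But $\deg\lambda_I=n(n-1)/2$ is the top harmonic degree for the symmetric group $S_n$ of Example~\ref{Ex:A}, so Theorem~\ref{T:invariant}(5), applied in type~$A$, converts this into $\langle\lambda_I,\lambda_0\rangle=0$, that is, $C_I=0$. Hence if $C_I\neq 0$ then $f_\VV\notin\II$, and Theorem~\ref{T:main} yields the universality of $\VV$.

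The only delicate point is the character calculation in the first step: the noncommutativity of $\HH$ forces each off-diagonal entry to split under $T$ into a $\CC$-part and a $\CC j$-part carrying the \emph{two different} characters $x_i-x_j$ and $x_i+x_j$. This doubling is exactly what converts the type-$A$ polynomial $\lambda_I(x)$ into the type-$C$ expression $\lambda_I(x_1^2,\ldots,x_n^2)$ and makes the freeness of $\SSS$ over $A$ the natural tool for reducing the type-$C$ ideal membership problem to the type-$A$ one. Everything else is a direct transcription of the argument used for $U(n)$.
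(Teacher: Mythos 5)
Your proof is correct, but it takes a genuinely different route from the paper's. The paper does not work with $\VV=\MM_I(n,\HH)$ itself: it first shrinks to the subspace $\VV'\subset\MM_I(n,\HH)$ of matrices whose diagonal entries lie in $\CC$. The $\mathbf{j}\CC$-parts of the diagonal are then pushed into the complement $\WW$, where they carry the characters $2x_i$, so that $f_{\VV'}=2^n\lambda_I(x_1^2,\ldots,x_n^2)\prod_i x_i$ has the top degree $m=n^2$ and $\VV'$ has optimal dimension. This lets the paper invoke the characteristic-number criterion directly (via the isometry $f(x)\mapsto f(x_1^2,\ldots,x_n^2)\prod_i x_i$, giving $C_{\VV'}=2^nC_I$), and universality of $\MM_I(n,\HH)$ follows \emph{a fortiori} since it contains $\VV'$. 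You instead keep the full $\MM_I(n,\HH)$, whose characteristic polynomial $\lambda_I(x_1^2,\ldots,x_n^2)$ has degree $n(n-1)<n^2$, so the inner-product shortcut of Proposition \ref{P:inner} is unavailable; you compensate with the ideal-contraction argument: since $\SSS$ is free over $A=\RR[x_1^2,\ldots,x_n^2]$ on the square-free monomials and the generators of $\II$ lie in $A$, one gets $\II\cap A=A\,\JJ_+$, which transports the type-$C$ membership question for $\lambda_I(x_1^2,\ldots,x_n^2)$ to the type-$A$ question for $\lambda_I$ in its top harmonic degree, where Theorem \ref{T:invariant}(5) applies. Both arguments are sound; your character computations ($\CC$- and $\mathbf{j}\CC$-parts of $A_{ij}$ carrying $x_i-x_j$ and $x_i+x_j$, and $\UU^T$ being the complex diagonal) agree with the paper's. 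What the paper's shrinking buys is an optimal-dimension universal subspace with an explicit characteristic number $2^nC_I$ (relevant for the flag-counting results elsewhere in the paper); what your version buys is a direct statement about $\MM_I(n,\HH)$ itself and a reusable algebraic lemma reducing type-$C$ ideal membership to type $A$.
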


\begin{proof}
We keep the notation from Example \ref{Ex:C}. Let $\VV$ be the
subspace consisting of all matrices $A\in\MM_I(n,\HH)$ such that
$A_{ii}\in\CC$ for $1\leq i\leq n$. For $(i,j)\in I$, let
$$\WW_{ij}=\{A\in\MM(n,\HH)|A_{kl}=0 \ \text{if} \
(k,l)\neq(i,j)\},$$ and let
$$\WW_i=\{A\in\MM(n,\HH)|A_{ii}\in \mathbf{j}\CC
\ \text{and} \ A_{kl}=0 \ \text{for} \ (k,l)\neq(i,i)\}$$ for $1\leq
i\leq n$. Then the subspace $$\WW=\left(\bigoplus_{(i,j)\in
I}\WW_{ij}\right) \oplus\left(\bigoplus_{i=1}^n\WW_i\right)$$ is
complementary to $\VV$ in $\MM(n,\HH)$. It is easy to see that each
$T$-invariant subspace $\WW_{ij}$ can be decomposed as the direct
sum of two $T$-irreducible subspaces and, with respect to suitable
orientations, they are equivalent to the characters $x_i+x_j$ and
$x_i-x_j$ of $T$. Moreover, each $\WW_i$ is $T$-irreducible and,
with respect to suitable orientation, it is equivalent to the
character $2x_i$ of $T$. So we have
$$f_{\VV}(x)=2^n\prod_{(i,j)\in I}(x_i^2-x_j^2)\prod_{i=1}^nx_i
=2^n\lambda_I(x_1^2,\ldots,x_n^2)\prod_{i=1}^nx_i.$$ Note that $\VV$
has optimal dimension, and the endomorphism of $\RR[x_1,\ldots,x_n]$
sending $f(x_1,\ldots,x_n)$ to
$f(x_1^2,\ldots,x_n^2)\prod_{i=1}^nx_i$ is isometric. So
$$C_{\VV}=\frac{\langle f_{\VV},f_0\rangle}{\langle
f_0,f_0\rangle}|(\ZZ/2\ZZ)^n\rtimes S_n|=\frac{\langle
\lambda_I,\lambda_0\rangle}{\langle
\lambda_0,\lambda_0\rangle}2^nn!=2^nC_I.$$ By Theorem \ref{T:main},
if $C_I\neq0$ then $\VV$ is universal, and \textit{a fortiori}
$\MM_I(n,\HH)$ is universal.
\end{proof}

For the real case, we first consider the case of $\MM(2n,\RR)$. We
partition a matrix $A\in\MM(2n,\RR)$ into $2\times 2$ blocks and
denote its $(i,j)$-th block entry by $a_{ij}(A)$.

\begin{theorem}\label{T:D}
Consider the conjugation representation of $SO(2n)$ in
$\MM(2n,\RR)$. Let $I$ be an $n\times n$ zero pattern. If
$C_I\neq0$, then the subspace
$$\MM_I(2n,\RR)=\{A\in\MM(2n,\RR)|a_{ij}(A)=0 \ \text{for} \ (i,j)\in I\}$$
is universal.
\end{theorem}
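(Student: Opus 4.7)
The plan is to apply Theorem \ref{T:main} directly with $\VV = \MM_I(2n,\RR)$, following the templates of Theorems \ref{T:A} and \ref{T:C} and using the root-system data from Example \ref{Ex:D}. First I would verify the setup: the maximal torus $T = SO(2)^n$ acts on $\MM(2n,\RR)$ by conjugation, sending each $2\times 2$ block $a_{ij}$ to $R(\theta_i)a_{ij}R(-\theta_j)$. The $T$-fixed vectors are supported on the diagonal blocks and hence lie in $\MM_I(2n,\RR)$, so $\UU^T \subset \VV$. A dimension count gives $\dim \MM_I(2n,\RR) = 4n^2 - 2n(n-1) = \dim\UU - 2m$, where $m = n(n-1) = \tfrac{1}{2}\dim G/T$, so $\VV$ has optimal dimension and $f_\VV$ has degree $m$.

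The core computation is the $T$-module decomposition of the natural complement $\WW = \bigoplus_{(i,j)\in I}\WW_{ij}$, where $\WW_{ij}$ consists of matrices supported on the $(i,j)$-th block. Identifying $\RR^2 \cong \CC$ via the standard complex structure, every real $2\times 2$ matrix can be written uniquely as $z \mapsto pz + q\bar z$ with $p,q\in\CC$. A direct calculation shows that under the conjugation action on the $(i,j)$-block the parameter $p$ transforms via $e^{\ii(\theta_i-\theta_j)}$ and $q$ via $e^{\ii(\theta_i+\theta_j)}$. Thus $\WW_{ij}$ splits as a sum of two $2$-dimensional $T$-irreducibles with characters $x_i - x_j$ and $x_i + x_j$, and a suitable choice of orientation on $\UU/\VV$ yields
\[
f_\VV(x) = \prod_{(i,j)\in I}(x_i-x_j)(x_i+x_j) = \lambda_I(x_1^2,\ldots,x_n^2).
\]

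To conclude, I would invoke $f_0(x) = \lambda_0(x_1^2,\ldots,x_n^2)$ and $|W| = 2^{n-1}n!$ from Example \ref{Ex:D}. Since the substitution $x_i \mapsto x_i^2$ sends distinct monomials to distinct monomials, it is an isometric embedding of $\RR[x_1,\ldots,x_n]$ into itself with respect to the orthonormal-monomial inner product. Therefore
\[
C_\VV = \frac{\langle f_\VV, f_0\rangle}{\langle f_0,f_0\rangle}|W| = \frac{\langle \lambda_I, \lambda_0\rangle}{\langle \lambda_0,\lambda_0\rangle}\cdot 2^{n-1}n! = \frac{C_I}{n!}\cdot 2^{n-1}n! = 2^{n-1}C_I,
\]
and $C_I \neq 0$ forces $C_\VV \neq 0$, so Theorem \ref{T:main} gives the universality of $\MM_I(2n,\RR)$.

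The main point requiring care is the character analysis of each $\WW_{ij}$: unlike the $U(n)$ case, where $T$ acts on an off-diagonal entry by a single character $x_i - x_j$, here the simultaneous left and right rotations give each real off-diagonal block both of the characters $x_i - x_j$ and $x_i + x_j$. This doubling is precisely what produces $\lambda_I$ in the squared variables and matches the $D_n$ fundamental harmonic $f_0$ exactly, making the reduction of $C_\VV$ to a scalar multiple of $C_I$ transparent.
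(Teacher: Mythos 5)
Your proposal is correct and follows essentially the same route as the paper: the same complement $\WW=\bigoplus_{(i,j)\in I}\WW_{ij}$, the same splitting of each block into $T$-irreducibles with characters $x_i\pm x_j$, the same identity $f_\VV(x)=\lambda_I(x_1^2,\ldots,x_n^2)$, the same isometry of the substitution $x_i\mapsto x_i^2$, and the same conclusion $C_\VV=2^{n-1}C_I$ fed into Theorem \ref{T:main}. The extra details you supply (the explicit check that $\UU^T\subset\VV$ and the $pz+q\bar z$ analysis of the block action) are correct and merely make explicit what the paper leaves to the reader.
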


\begin{proof}
We keep the notation from Example \ref{Ex:D}. For $(i,j)\in I$, let
$$\WW_{ij}=\{A\in\MM(2n,\RR)|a_{kl}(A)=0 \ \text{if} \
(k,l)\neq(i,j)\}.$$ Then $\WW=\bigoplus_{(i,j)\in I}\WW_{ij}$ is a
subspace of $\MM(2n,\RR)$ complementary to $\VV=\MM_I(2n,\RR)$. As
in the proof of Theorem \ref{T:C}, each $\WW_{ij}$ decomposes as the
direct sum of two $T$-irreducible subspaces, and with suitable
orientations, they are orientation preserving equivalent to the
characters $x_i+x_j$ and $x_i-x_j$ of $T$, respectively. So we have
$$f_{\VV}(x)=\prod_{(i,j)\in I}(x_i^2-x_j^2)=\lambda_I(x_1^2,\ldots,x_n^2).$$
Note that $\VV$ has optimal dimension, and the endomorphism of
$\RR[x_1,\ldots,x_n]$ sending $f(x_1,\ldots,x_n)$ to
$f(x_1^2,\ldots,x_n^2)$ is isometric. So
$$C_{\VV}=\frac{\langle f_{\VV},f_0\rangle}{\langle
f_0,f_0\rangle}|(\ZZ/2\ZZ)^{n-1}\rtimes S_n|=\frac{\langle
\lambda_I,\lambda_0\rangle}{\langle
\lambda_0,\lambda_0\rangle}2^{n-1}n!=2^{n-1}C_I.$$ By Theorem
\ref{T:main}, if $C_I\neq0$ then $\VV_I$ is universal.
\end{proof}

For the case of $\MM(2n+1,\RR)$, we partition a matrix
$A\in\MM(2n+1,\RR)$ into blocks $a_{ij}(A)$ of size
$(2-\delta_{i,n+1})\times(2-\delta_{j,n+1})$, where $\delta_{i,j}$
is the Kronecker symbol.

\begin{theorem}\label{T:B}
Consider the conjugation representation of $SO(2n+1)$ in
$\MM(2n+1,\RR)$. Let $J$ be an $(n+1)\times (n+1)$ zero pattern such
that exactly one of $(i,n+1)$ and $(n+1,i)$ is in $J$ for any $i\le
n$, and let $I=\{(i,j)\in J | i,j\le n \}$. If $C_I\neq0$, then the
subspace
$$ \MM_{J}(2n+1,\RR)=\{A\in\MM(2n+1,\RR)|a_{ij}(A)=0 \ \text{for} \
(i,j)\in J\} $$ is universal.
\end{theorem}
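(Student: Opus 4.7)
The plan is to follow the blueprint of Theorems \ref{T:C} and \ref{T:D}: choose a $T$-invariant complement $\WW$ of $\VV = \MM_J(2n+1,\RR)$, decompose it into $2$-dimensional $T$-irreducibles, read off the characteristic polynomial $f_\VV$, and reduce $C_\VV$ to a multiple of $C_I$ via an isometric substitution on $\RR[x_1,\ldots,x_n]$; Theorem \ref{T:main} will then yield universality.

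Concretely, I would take $\WW = \bigoplus_{(i,j)\in J}\WW_{ij}$, where $\WW_{ij}$ consists of matrices supported on the $(i,j)$-th block in the partition fixed just before the theorem. A direct count gives $\dim\WW = 4|I| + 2n = 2n^2 = 2m$, so $\VV$ has optimal dimension; and a block-by-block inspection of the $T$-action shows that $\UU^T$ is block-diagonal, and therefore contained in $\VV$, so the hypothesis of Theorem \ref{T:main} is met. For $(i,j)\in I$ with $i,j\le n$, the $4$-dimensional block $\WW_{ij}$ splits, exactly as in Theorem \ref{T:D}, into two $2$-dimensional $T$-irreducibles with characters $x_i\pm x_j$. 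The new ingredient is the $n$ blocks involving row or column $n+1$: since the $(n+1)$st factor of $T=SO(2)^n\times\{1\}$ is trivial, each $\WW_{i,n+1}$ (resp.\ $\WW_{n+1,i}$) is itself $2$-dimensional $T$-irreducible with character $x_i$ (resp.\ $-x_i$). By hypothesis exactly one of $(i,n+1),(n+1,i)$ lies in $J$ for each $i\le n$, so with appropriate orientations
$$f_\VV(x) = \epsilon\,\lambda_I(x_1^2,\ldots,x_n^2)\prod_{i=1}^n x_i$$
for some $\epsilon\in\{\pm 1\}$.

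To finish, I would compute $C_\VV$ by comparing $f_\VV$ with $f_0(x)=\lambda_0(x_1^2,\ldots,x_n^2)\prod_{i=1}^n x_i$ from Example \ref{Ex:B}. The substitution $f(x_1,\ldots,x_n)\mapsto f(x_1^2,\ldots,x_n^2)\prod_{i=1}^n x_i$ sends distinct monomials to distinct monomials and is therefore an isometry of $\RR[x_1,\ldots,x_n]$ onto its image; hence $\langle f_\VV,f_0\rangle = \epsilon\langle\lambda_I,\lambda_0\rangle = \epsilon C_I$, $\langle f_0,f_0\rangle = n!$, and with $|W|=2^n n!$ one gets $C_\VV = \epsilon\cdot 2^n C_I$. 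Thus $C_I\neq 0$ forces $C_\VV\neq 0$, and Theorem \ref{T:main} gives the universality of $\MM_J(2n+1,\RR)$.

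The most delicate point I foresee is the character bookkeeping for the mixed $(i,n+1)$ and $(n+1,i)$ blocks: because only one factor of $T$ acts nontrivially there, the associated irreducible carries a single character $\pm x_i$ rather than the pair $x_i\pm x_j$ visible in the interior blocks, and the hypothesis on $J$ is needed precisely to ensure that all $n$ such characters appear so that $f_\VV$ acquires the factor $\prod_i x_i$ matching the corresponding factor of $f_0$.
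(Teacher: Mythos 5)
Your proposal is correct and follows essentially the same route as the paper: the same complement $\WW$ supported on the blocks indexed by $J$, the same decomposition into $T$-irreducibles yielding $f_\VV=\pm\lambda_I(x_1^2,\ldots,x_n^2)\prod_{i=1}^n x_i$, the same isometric-substitution computation giving $C_\VV=\pm 2^nC_I$, and the conclusion via Theorem \ref{T:main}. You are merely more explicit than the paper about the dimension count, the inclusion $\UU^T\subset\VV$, and the role of the mixed $(i,n+1)$, $(n+1,i)$ blocks, all of which is accurate.
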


\begin{proof}
The subspace
$$ \WW=\{A\in\MM(2n+1,\RR)|a_{ij}(A)=0 \ \text{for} \ (i,j)\notin J\} $$
of $\MM(2n+1,\RR)$ is complementary to $\VV=\MM_{J}(2n+1,\RR)$. By
choosing a decomposition of $\WW$ and suitable orientations on the
direct summands, we compute
$$f_{\VV}(x)=\prod_{(i,j)\in I}(x_i^2-x_j^2)\prod_{i=1}^nx_i
=\lambda_I(x_1^2,\ldots,x_n^2)\prod_{i=1}^nx_i,$$ and
$$C_{\VV}=\frac{\langle f_{\VV},f_0\rangle}{\langle
f_0,f_0\rangle}|(\ZZ/2\ZZ)^n\rtimes S_n|=\frac{\langle
\lambda_I,\lambda_0\rangle}{\langle
\lambda_0,\lambda_0\rangle}2^nn!=2^nC_I.$$ So if $C_I\neq0$, then by
Theorem \ref{T:main}, $\VV$ is universal.
\end{proof}

\begin{remark}
It is easy to see that the condition of Proposition \ref{P:shrink}
is satisfied for all representations considered in this section.
\end{remark}

We say that an $n\times n$ zero pattern $I$ is \emph{simple} if $I$
contains exactly one of $(i,j)$ and $(j,i)$ for any
$i,j\in\{1,\ldots,n\}$ with $i\neq j$.

\begin{theorem}\label{T:A2}
Let $I$ (resp. $J$) be a simple $n\times n$ (resp.
$(n+1)\times(n+1)$) zero pattern. Then in the context of Theorem
\ref{T:A} (resp. \ref{T:C}, \ref{T:D}, \ref{T:B}) the subspace
$\MM_I(n,\CC)$ (resp. $\MM_I(n,\HH)$, $\MM_I(2n,\RR)$,
$\MM_J(2n+1,\RR)$) is universal.
\end{theorem}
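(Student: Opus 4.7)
The plan is to reduce all four assertions of Theorem \ref{T:A2} to a single arithmetic observation: for any simple zero pattern the characteristic polynomial of the pattern coincides, up to a sign, with the polynomial $\lambda_0$ of \eqref{E:lambda0}. Once that is in hand, the universality statements follow immediately from Theorems \ref{T:A}, \ref{T:C}, \ref{T:D}, and \ref{T:B}, since each of those theorems derives universality from the nonvanishing of $C_I$.

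The key step I would carry out first is the following. Let $I$ be a simple $n \times n$ zero pattern. By definition $I$ picks exactly one of $(i,j)$ and $(j,i)$ for every unordered pair $\{i,j\}$ with $i \neq j$, so $|I| = n(n-1)/2$ (consistent with the definition of zero pattern) and
$$ \lambda_I(x) \;=\; \prod_{(i,j)\in I}(x_i - x_j) \;=\; \varepsilon \prod_{1 \le i < j \le n}(x_i - x_j) \;=\; \varepsilon\, \lambda_0(x), $$
where $\varepsilon \in \{\pm 1\}$ records the product of the signs introduced when each factor $(x_i - x_j)$ with $i > j$ is rewritten as $-(x_j - x_i)$. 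Since the monomials form an orthonormal basis for $\langle\cdot,\cdot\rangle$, this gives
$$ C_I \;=\; \langle \lambda_I, \lambda_0 \rangle \;=\; \varepsilon\, \langle \lambda_0, \lambda_0 \rangle \;=\; \varepsilon\, n! \;\neq\; 0. $$

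For the $U(n)$, $Sp(n)$ and $SO(2n)$ cases the theorem now follows directly by invoking Theorems \ref{T:A}, \ref{T:C} and \ref{T:D} respectively. For the $SO(2n+1)$ case one needs a small compatibility check: if $J$ is a simple $(n+1)\times (n+1)$ zero pattern, then for each $i \le n$ exactly one of $(i,n+1)$ and $(n+1,i)$ lies in $J$, which is precisely the hypothesis of Theorem \ref{T:B}. Setting $I = \{(i,j) \in J : i,j \le n\}$, one has $|I| = (n+1)n/2 - n = n(n-1)/2$ and $I$ inherits simplicity from $J$, so the preceding computation yields $C_I \neq 0$ and Theorem \ref{T:B} applies.

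I do not anticipate any serious obstacle in this argument. The only thing worth double-checking is that the internal characteristic numbers computed inside the proofs of Theorems \ref{T:C}, \ref{T:D}, \ref{T:B} (namely $2^n C_I$, $2^{n-1} C_I$ and $2^n C_I$ respectively) are nonzero, but this is automatic from $C_I \neq 0$. Thus the whole proof is a one-line reduction once one notices that a simple pattern merely reshuffles the signs of the factors of $\lambda_0$.
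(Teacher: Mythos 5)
Your proposal is correct and is essentially the paper's own proof: both arguments rest on the observation that a simple pattern gives $\lambda_I = \pm\lambda_0$, hence $C_I = \pm n! \neq 0$, and then invoke Theorems \ref{T:A}, \ref{T:C}, \ref{T:D}, \ref{T:B}. Your extra check that a simple $(n+1)\times(n+1)$ pattern $J$ satisfies the hypothesis of Theorem \ref{T:B} is a welcome but minor elaboration of the same argument.
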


\begin{proof}
We have
$$
\lambda_I(x)=\prod_{(i,j)\in I}(x_i-x_j)=\pm\prod_{(i,j)\in
I_0}(x_i-x_j)=\pm\lambda_0(x).
$$
So $C_I=\langle\lambda_I,\lambda_0\rangle=
\pm\langle\lambda_0,\lambda_0\rangle=\pm n!$, and the assertion
follows from the above theorems.
\end{proof}

The problem of universality of the subspace in the following example
was raised in \cite{DJ} but seems difficult to prove by other
methods.

\begin{example}\label{Ex:cyclic}
Let $n=3$. Consider the cyclic zero pattern
$$I=\{(1,3),(2,1),(3,2)\}.$$ Since $I$ is a simple zero pattern, $\MM_I(3,\CC)$ is
universal with respect to the conjugation representation of $U(3)$
in $\MM(3,\CC)$. \qed
\end{example}

\begin{example}[\cite{VP,DD}]
Let $n=4$. Consider the bitriangular zero pattern
$$I=\{(1,3),(1,4),(2,4),(3,1),(4,1),(4,2)\}.$$ Its characteristic
polynomial is
$$\lambda_I(x)=-(x_1-x_3)^2(x_1-x_4)^2(x_2-x_4)^2,$$ and then
$C_I=\langle\lambda_I,\lambda_0\rangle=-12$. So $\MM_I(4,\CC)$ is
universal with respect to the conjugation representation of $U(4)$
in $\MM(4,\CC)$. Since $I$ is bitriangular, for $A\in\MM(n,\CC)$
with $\OO_A\pitchfork\MM_I(4,\CC)$ we have $N(A,\MM_I(4,\CC))=12$.
\qed
\end{example}

Using the same method, it is easy to generalize Theorems
\ref{T:A-inverse} and \ref{T:A2} to arbitrary complex
representations of any connected compact Lie group. We state the
result for the complexified adjoint representation below and leave
the details to the reader.

\begin{theorem}
Consider the complexified adjoint representation of a connected
compact Lie group $G$ in $\Lg_\CC$. Let $\Psi$ be a subset of the
root system $\Phi$ such that $|\Psi|=|\Phi^+|$, and let
$\VV=\Lt_\CC\oplus\bigoplus_{\alpha\in\Phi\setminus\Psi}\Lg_\alpha$. \\
(1) If $\Phi=\Psi\cup(-\Psi)$, then $\VV$ is universal;\\
(2) If $\alpha\in\Psi\cap\Phi^\pm$ and $\beta\in\Phi^\pm$ (with the
same choice of signs) imply that
$\alpha+\beta\notin\Phi\setminus\Psi$, then $\VV$ is universal if
and only if $C_\VV\neq0$. Furthermore, if $X\in\Lg_\CC$ and
$\OO_X\pitchfork\VV$, then $N(X,\VV)=|C_\VV|$.
\end{theorem}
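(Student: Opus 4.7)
The plan is to specialize Theorem \ref{T:main} and Corollary \ref{C:inverse} to the complexified adjoint representation, paralleling the arguments for Theorems \ref{T:A}, \ref{T:A-inverse}, and \ref{T:A2}. The starting point is to identify the relevant data: $T$ acts trivially on $\Lt_\CC$ and via the character $\alpha$ on $\Lg_\alpha$, so $(\Lg_\CC)^T = \Lt_\CC \subset \VV$, and the complex subspace $\WW = \bigoplus_{\alpha \in \Psi} \Lg_\alpha$ is a $T$-invariant complement of $\VV$. Since $|\Psi| = m = \frac{1}{2}\dim G/T$, the subspace $\VV$ has optimal dimension. Remark \ref{R:1} then yields $f_\VV = \prod_{\alpha \in \Psi} \alpha$, and consequently $C_\VV = \frac{\langle f_\VV, f_0\rangle}{\langle f_0, f_0\rangle}|W|$ with $f_0 = \prod_{\alpha \in \Phi^+} \alpha$.

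For part (1), the hypothesis $\Phi = \Psi \cup (-\Psi)$ together with $|\Psi| = \frac{1}{2}|\Phi|$ forces $\Psi$ to contain exactly one element from each pair $\{\alpha, -\alpha\}$. So up to a sign $f_\VV = \pm f_0$, which gives $C_\VV = \pm|W| \ne 0$, and Theorem \ref{T:main} concludes universality.

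For part (2), I would invoke Corollary \ref{C:inverse} with the decomposition $\WW = \WW^+ \oplus \WW^-$, where $\WW^\pm = \bigoplus_{\alpha \in \Psi \cap \Phi^\pm} \Lg_\alpha$. By Remark \ref{R:2}, $(\psi_v)_\CC(X) = -P_\WW[X,v]$ for $v \in \VV$ and $X \in \Lm_\CC$. The task is to verify $(\psi_v)_\CC(\Ln^\pm) \subset \WW^\pm$. For $X = X_\beta$ with $\beta \in \Phi^\pm$ and $v = H \in \Lt_\CC$, the bracket $[X_\beta, H]$ lies in $\Lg_\beta$, which is either killed by $P_\WW$ (when $\beta \notin \Psi$) or lies in $\WW^\pm$ (when $\beta \in \Psi \cap \Phi^\pm$). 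For $v = X_\gamma$ with $\gamma \in \Phi \setminus \Psi$, the bracket $[X_\beta, X_\gamma]$ either vanishes, lies in $\Lt_\CC$ (killed by $P_\WW$), or sits in $\Lg_{\beta+\gamma}$; in the last case the projection survives only when $\alpha := \beta + \gamma \in \Psi$, so I must show $\alpha \in \Phi^\pm$. Suppose instead $\alpha \in \Phi^\mp$; then $\alpha \in \Psi \cap \Phi^\mp$ and $-\beta \in \Phi^\mp$ share a sign, so the hypothesis (applied with opposite signs) forces $\alpha + (-\beta) \notin \Phi \setminus \Psi$, contradicting $\gamma = \alpha - \beta \in \Phi \setminus \Psi$. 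This verifies the hypothesis of Corollary \ref{C:inverse}, from which the equivalence of universality with $C_\VV \ne 0$ and the count $N(X,\VV) = |C_\VV|$ follow at once.

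The main obstacle I anticipate is the bookkeeping for the bracket verification in part (2): the assumption on $\Psi$ is tailored to guarantee exactly the required containment, but matching the signs correctly requires the careful contrapositive argument above. Part (1) and the setup are essentially immediate specializations of earlier results.
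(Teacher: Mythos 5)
Your proposal is correct and follows exactly the route the paper intends: the authors state this theorem with the remark that it follows "by the same method" as Theorems \ref{T:A-inverse} and \ref{T:A2}, leaving the details to the reader, and your argument supplies precisely those details (optimal dimension and $f_\VV=\prod_{\alpha\in\Psi}\alpha$ via Remark \ref{R:1}, $f_\VV=\pm f_0$ for part (1), and the bracket/sign verification of the hypothesis of Corollary \ref{C:inverse} for part (2)). The contrapositive check that $\beta+\gamma\in\Psi$ forces $\beta+\gamma\in\Phi^{\pm}$ is the right way to use the stated condition, and I see no gap.
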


\section{Concluding remark}

In the previous sections, we assumed that $T\subset G_\VV$, and then
constructed the vector bundles $E_\WW\cong E_\UU/E_\VV$ over $G/T$,
and reduced the question of universality of $\VV$ to the existence
of zeros of certain sections of $E_\WW$. A sufficient condition for
this is that the Euler class $e(E_\WW)$ is nonzero.

This method remains valid if we replace $T$ by any closed subgroup
$H$ of $G_\VV$. The condition that $T\subset G_\VV$ can be also
dropped. More precisely, for any such $H$, we let $\WW$ be a
$G_\VV$-invariant subspace of $\UU$ complementary to $\VV$, and
construct the subbundles $E_{H,\VV}$ and $E_{H,\WW}$ of the trivial
bundle $E_{H,\UU}=G/H\times\UU$ whose fibers at $gH$ are
$\rho(g)(\VV)$ and $\rho(g)(\WW)$, respectively. Similarly to
Theorem \ref{T:main}, if certain characteristic class $o(E_{H,\WW})$
(e.g. Euler, top Stiefel-Whitney, or top Chern class) of $E_{H,\WW}$
is nonzero, which ensures that every smooth section of $E_{H,\WW}$
has a zero, then $\VV$ is universal. For example, if $H$ preserves
an orientation on $\WW$, then $E_{H,\WW}$ is orientable and we can
use the Euler class.

If $H_1\subset H_2$ are two closed subgroups of $G_\VV$, then there
is a natural quotient map $q:G/H_1\rightarrow G/H_2$, and it is easy
to see that $E_{H_1,\WW}\cong q^*(E_{H_2,\WW})$. Hence we have
$o(E_{H_1,\WW})=q^*(o(E_{H_2,\WW}))$. If $q^*$ is not injective, it
may happen that $o(E_{H_1,\WW})=0$ but $o(E_{H_2,\WW})\neq0$. So by
using a larger subgroup $H$ of $G_\VV$, we may get stronger
sufficient condition for the universality of $\VV$.

An interesting case occurs when $\dim\VV=\dim\UU-\dim G/G_\VV$ and
$T\subset G_\VV$. We further assume that $T$ is the identity
component of $G_\VV$. Then $G_\VV/T$ is a subgroup of the Weyl group
$W$. To avoid the trouble with non-orientability, we assume that
$\UU$ admits a complex structure such that the $G$-action is complex
linear, and that $\VV$ and $\WW$ are complex subspaces. Then we can
use the top Chern class (which is equal to the Euler class of the
underlying real bundle). We choose $H_1=T$, $H_2=G_\VV$. Then $\dim
G/T=\dim G/G_\VV=2m$, and $c_{\mathrm{top}}(E_{T,\WW})\in
H^{2m}(G/T,\ZZ)$, $c_{\mathrm{top}}(E_{G_\VV,\WW})\in
H^{2m}(G/G_\VV,\ZZ)$. It is easy to prove that $G/G_\VV$ is
non-orientable if and only if $G_\VV/T$ contains an odd element of
$W$. From algebraic topology, if $G/G_\VV$ is orientable, then
$H^{2m}(G/G_\VV,\ZZ)\cong\ZZ$ and
$q^*:H^{2m}(G/G_\VV,\ZZ)\rightarrow H^{2m}(G/T,\ZZ)$ is injective.
So we obtain the same information from the top Chern classes by
using $T$ and $G_\VV$. But if $G/G_\VV$ is non-orientable, then
$H^{2m}(G/G_\VV,\ZZ)\cong\ZZZ$ and
$q^*:H^{2m}(G/G_\VV,\ZZ)\rightarrow H^{2m}(G/T,\ZZ)$ is equal to
zero. In this case we always have
$c_{\mathrm{top}}(E_{T,\WW})=q^*(c_{\mathrm{top}}(E_{G_\VV,\WW}))=0$,
but it may happen that $c_{\mathrm{top}}(E_{G_\VV,\WW})\neq0$, and
if this is the case, we get more information than by using $T$ and
we can prove the universality of $\VV$. We demonstrate this in the
following simple example.

Consider the complexified adjoint representation of $G=SU(2)$ in
$\UU=\mathfrak{sl}(2,\CC)$. Let $\VV=\left\{\begin{pmatrix}0&*\\
*&0\end{pmatrix}\right\}$. Then $G_\VV=N_G(T)$. Let $\WW=\left\{\begin{pmatrix}a&0\\
0&-a\end{pmatrix}|a\in\CC\right\}$. Since $G_\VV/T=W$ contains an
odd element, $e(E_{T,\WW})=0$. Indeed, $E_{T,\WW}$ is the trivial
complex line bundle over $G/T\cong S^2$. But it is easy to see that
$E_{G_\VV,\WW}$, as a complex line bundle over $G/G_\VV\cong\RR
P^2$, is equivalent to the complexification of the tautological real
line bundle over $\RR P^2$, which has nonzero first Chern class. So
$c_{\mathrm{top}}(E_{G_\VV,\WW})\neq0$ and $\VV$ is universal.

The above consideration motivates us to ask the following question.

\begin{question}
Suppose that $\dim\VV=\dim\UU-\dim G/G_\VV$ and $T\subset G_\VV$.
Does the universality of $\VV$ imply that certain obstruction class
of $E_{G_\VV,\WW}$, for the existence of non-vanishing sections, is
nonzero?
\end{question}

We remark that if the condition $\dim\VV=\dim\UU-\dim G/G_\VV$ or
$T\subset G_\VV$ is dropped, then in general the answer is negative.
This can be seen from the following counter-examples.

For the case of $T\not\subset G_\VV$, consider the representation
$e^{i\theta}\mapsto\diag(e^{i\theta},e^{2i\theta})$ of $U(1)$ in
$\CC^2$. We view $\CC^2$ as a real vector space, and let
$\VV=\{(a,b)\in\CC^2|\mathrm{Re}(a+b)=0\}$. Then $G_\VV$ is trivial,
and $E_{G_\VV,\WW}$ is a trivial real line bundle over $U(1)$. But
it is easy to see that $\VV$ is universal.

For the case of $\dim\VV>\dim\UU-\dim G/G_\VV$, we can consider the
complexified adjoint representation of any semisimple compact group
$G$ of rank greater than $1$ in $\Lg_\CC$. Let $X$ be a regular
element in $\Lt$, let $\WW=\CC X$, and let $\VV=\WW^\bot$ with
respect to the Killing form of $\Lg_\CC$. Then $G_\VV=T$, and the
above inequality holds. It is easy to see that $E_{G_\VV,\WW}$ is a
trivial complex line bundle over $G/T$. But since $\VV$ contains the
sum of all root spaces in $\Lg_\CC$, $\VV$ is universal by
\cite{DT}, Theorem 3.4.

\end{document}